\numberwithin{equation}{section}
\theoremstyle{plain}
\newtheorem{theorem}{Theorem}[section]
\newtheorem{proposition}[theorem]{Proposition} %[section]
\newtheorem{lemma}[theorem]{Lemma} %[section]
\theoremstyle{definition}
\newtheorem{definition}[theorem]{Definition} %[section]
\newtheorem{question}[theorem]{Question} %[section]
\theoremstyle{remark}
\newtheorem{remark}[theorem]{Remark} %[section]
\newtheorem{example}[theorem]{Example} %[section]
{\it}{\rm}  %added new
\newcommand{\sss}{\scriptscriptstyle}
\newcommand{\email}[1]{{\scriptsize{\it E-mail address}\/: {\rm #1}} }
\providecommand{\norm}[1]{\lVert#1\rVert}
\DeclareMathOperator{\diag}{diag}
\def\blfootnote{\gdef\@thefnmark{}\@footnotetext}
\begin{document}
\blfootnote{\textup{2010} \textit{Mathematics Subject Classification}.
52C25, 52B11, 51F99.}

\blfootnote{\textit{key words and phrases}.
rigidity, lifting, $k$-stress, dual, characteristic polynomial.}

\blfootnote{\email{lizhaozhang@alum.mit.edu}}

\begin{titlepage}
\title{Lifting degenerate simplices with a single volume constraint}

\author{Lizhao Zhang
%\footnote{\email{lizhaozhang@alum.mit.edu}}
}
\date{}
\end{titlepage}

\maketitle

\begin{abstract}
Let $M^d$ be the spherical, Euclidean, or hyperbolic space of dimension $d\ge n+1$.
Given any degenerate $(n+1)$-simplex $\mathbf{A}$ in $M^d$ with non-degenerate $n$-faces $F_i$,
there is a natural partition of the set of $n$-faces into two subsets $X_1$ and $X_2$
such that $\sum_{X_1}V_n(F_i)=\sum_{X_2}V_n(F_i)$,
except for a special spherical case where $X_2$ is the empty set
and $\sum_{X_1}V_n(F_i)=V_n(\mathbb{S}^n)$  instead.
For all cases, if the vertices vary smoothly in $M^d$ with a \emph{single} volume constraint 
that $\sum_{X_1}V_n(F_i)-\sum_{X_2}V_n(F_i)$ is preserved as a constant (0 or $V_n(\mathbb{S}^n)$),
we prove that if a \emph{stress} invariant $c_{n-1}(\alpha^{n-1})$ of the degenerate simplex is non-zero,
then the vertices will be confined to a lower dimensional $M^n$ for any sufficiently small motion.
This answers a question of the author and we also show that in the Euclidean case,
$c_{n-1}(\alpha^{n-1})=0$ is equivalent to the vertices of a \emph{dual} degenerate $(n+1)$-simplex
lying on an $(n-1)$-sphere in $\mathbb{R}^n$. 
%\keywords{Rigidity \and Lifting \and $k$-stress \and Dual \and Characteristic polynomial}
%\subclass{52C25 \and 52B11 \and 51F99}
\end{abstract}

\section{Introduction}
\label{section_intro}

Let $M^d$ of dimension $d\ge n+1$ be the spherical, Euclidean, or hyperbolic space of
constant curvature $\kappa$, and $\mathbf{A}$ be a
degenerate $(n+1)$-dimensional simplex in $M^d$ 
with non-degenerate $n$-faces.
By degenerate we mean that the vertices $\{A_1, \dots, A_{n+2}\}$ of $\mathbf{A}$
are confined to a lower dimensional $M^n$.
Due to the degeneracy, the convex hull of the vertices in $M^d$ is an $n$-dimensional region in $M^n$.
The $n$-faces $F_i$ of $\mathbf{A}$ form a double covering of this region
with a natural partition of the set of $n$-faces into two subsets $X_1$ and $X_2$
such that $\sum_{X_1}V_n(F_i)=\sum_{X_2}V_n(F_i)$;
except for a trivial exception in the spherical case when the vertices are not confined to any open half sphere,
then in this case $X_2$ is the empty set and
$\sum_{X_1}V_n(F_i)=V_n(\mathbb{S}^n)$ instead.
The partition can also be viewed as induced by Radon's theorem.

For all cases, if the vertices of $\mathbf{A}$ vary smoothly and are confined to $M^n$, 
then obviously we have that $\sum_{X_1}V_n(F_i)-\sum_{X_2}V_n(F_i)$ 
(here $V_n(F_i)$ is short for $V_n(F_i(t))$ when the context is clear)
is preserved as a constant (0 or $V_n(\mathbb{S}^n)$)
for any small motion.
But what about the \emph{inverse}?
Inspired by earlier work of the author \cite{Zhang:rigidity}, we ask the following question:

\begin{question}
\label{question_lifting}
If the vertices of $\mathbf{A}$ vary smoothly in $M^d$
with a \emph{single} volume constraint that
$\sum_{X_1}V_n(F_i)-\sum_{X_2}V_n(F_i)$ is preserved as a constant (0 or $V_n(\mathbb{S}^n)$),
then does this constraint confine the vertices of $\mathbf{A}$ 
to a lower dimensional $M^n$ for any sufficiently small motion?
\end{question}

In this paper we always require the $n$-faces to be non-degenerate during any motion.
We provide two opposing views on the question.
On the one hand, notice that up to congruence the degrees of freedom of $\mathbf{A}$ in $M^d$
is $(n+2)(n+1)/2$ (the number of edges),
or subtract 1 if the motion of $\mathbf{A}$ is confined to $M^n$.
So under a simple view of the excess degrees of freedom, with just a single volume constraint 
as that of Question~\ref{question_lifting}, except for some extreme cases
it is hardly expected that the answer might be affirmative.
But on the other hand, if we treat all $(n+1)$-simplices up to congruence 
as points in an $(n+2)(n+1)/2$-dimensional manifold and the degenerate $(n+1)$-simplices
as the boundary of the manifold, then in the manifold the region that satisfies 
$\sum_{X_1}V_n(F_i)-\sum_{X_2}V_n(F_i)=0$ (or  $V_n(\mathbb{S}^n)$)
is a codimension one region of the manifold and coincides with the boundary partially.
Then under this view the answer to Question~\ref{question_lifting} is expected to
be affirmative for ``almost all'' configurations, and the focus shifts to finding
\emph{when} $\mathbf{A}$ can be lifted to form a non-degenerate simplex.

\subsection{Main results}

In \cite{Zhang:rigidity} we obtained a sequence of \emph{stress} invariants 
$c_0(\alpha^0), \dots, c_{n+1}(\alpha^{n+1})$ for $\mathbf{A}$, 
where $\alpha$ is a 1-stress on $\mathbf{A}$,
and $\alpha^k$ is induced as a \emph{$k$-stress} on $\mathbf{A}$.
We denote the pair by $(\mathbf{A},\alpha)$
and reserve the notation for the rest of this paper.
The notion of $k$-stress on simplicial complexes
was first introduced by Lee~\cite{Lee:stress}
(see also Rybnikov~\cite{Rybnikov:stresses} and Tay \emph{et al.}~\cite{TWW}).
To answer Question~\ref{question_lifting}, we have the following theorem
where $c_{n-1}(\alpha^{n-1})$ plays a central role.

\begin{theorem}
\label{theorem_lifting}
\emph{(Main Theorem 1)}
If $\mathbf{A}$ varies smoothly in $M^d$ with a single volume constraint that
$\sum_{X_1}V_n(F_i)-\sum_{X_2}V_n(F_i)$ is preserved as a constant (0 or $V_n(\mathbb{S}^n)$)
and $c_{n-1}(\alpha^{n-1})\ne 0$,
then the vertices are confined to a lower dimensional $M^n$ for any sufficiently small motion.
\end{theorem}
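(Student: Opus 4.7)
The plan is to analyze the signed volume functional $f(t) = \sum_{X_1} V_n(F_i(t)) - \sum_{X_2} V_n(F_i(t))$ infinitesimally along a smooth motion $\mathbf{A}(t)$ with $\mathbf{A}(0) = \mathbf{A}$. Since $\mathbf{A}(0)$ lies in a totally geodesic $M^n \subset M^d$, I would decompose each vertex velocity $\dot A_i(0)$ into a tangential part $u_i \in T_{A_i} M^n$ and a normal part $v_i$ in the orthogonal complement of dimension $d-n$. Tangential motions preserve the condition that the vertices lie in $M^n$, so by the geometric interpretation reviewed in the introduction the constraint $f(t) \equiv \mathrm{const}$ is automatically respected to all orders by such motions. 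Hence it suffices to prove that any motion whose initial velocity has a non-trivial normal component $v = (v_1,\dots,v_{n+2})$ must change $f(t)$, unless that $v$ is induced by an ambient isometry of $M^d$ fixing $M^n$ pointwise.

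Next, I would Taylor-expand $f(t)$ at $t=0$. The first-order term $f'(0)$ ought to vanish identically: at a degenerate configuration each face $F_i$ is still a genuine $n$-simplex inside $M^n$, so the contribution of a normal velocity $v_j$ to $V_n(F_i)$ is $O(|v|^2)$, while tangential variations cancel between $X_1$ and $X_2$ by the same reason the sum equation $\sum_{X_1} V_n = \sum_{X_2} V_n$ holds in the first place. The heart of the argument lies in the second-order term. Using the Cayley--Menger (or Gram-matrix) representation of $V_n(F_i)$ and expanding to second order in the $v_j$, I expect $f''(0)$ to reduce to a quadratic form
\begin{equation*}
Q(v,v) \;=\; \sum_{j,k} Q_{jk}\,\langle v_j,v_k\rangle,
\end{equation*}
where $Q_{jk}$ is determined by the combinatorial/metric data of $\mathbf{A}$ and the orientation assignment prescribed by the Radon partition $X_1,X_2$. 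It is precisely this signed partition that should cause the cross terms from the two "sides" of the degeneracy to combine into a clean bilinear expression.

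The main algebraic step, and the one I expect to be the hardest, is to identify the coefficient matrix $(Q_{jk})$ with the $(n-1)$-stress invariant $c_{n-1}(\alpha^{n-1})$. The 1-stress $\alpha$ is the (unique up to scalar) linear dependence among the $n+2$ vertex positions, and the induced stresses $\alpha^k$ live on the $k$-faces. Expanding each $V_n(F_i)$ about the degenerate configuration, the second variation should reorganize by the $(n-1)$-faces of $\mathbf{A}$, matching the combinatorial support of $\alpha^{n-1}$. The aim is to show $Q_{jk} = c_{n-1}(\alpha^{n-1})\,\mu_{jk}$ for a universal non-degenerate bookkeeping matrix $\mu_{jk}$ coming from the 1-stress equilibrium identities. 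Once this identification is accomplished, $c_{n-1}(\alpha^{n-1})\ne 0$ implies $Q$ is non-degenerate on the quotient of normal-velocity space by ambient isometries, so $Q(v,v) = 0$ forces $v$ to correspond to a rigid translation of $M^n$ inside $M^d$, i.e.\ the vertices still lie in a common copy of $M^n$.

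Finally, to pass from this infinitesimal statement to the "any sufficiently small motion" conclusion, I would invoke the implicit function theorem on the map $(u,v)\mapsto f(\mathbf{A}(u,v))$ near $\mathbf{A}$: the non-degeneracy of $Q$ in the normal direction, modulo ambient isometries, shows that the level set $\{f = \mathrm{const}\}$ locally coincides with the locus of configurations confined to some $M^n$. The chief obstacle is the middle step: writing out the second-order expansion of the signed volume sum on a degenerate $(n+1)$-simplex and recognizing the induced $(n-1)$-stress inside it. The combinatorial bookkeeping of the Radon signs, together with the spherical/hyperbolic curvature corrections in the volume expansion, will have to be organized carefully so that what is a priori a sum over $n+2$ faces with many independent geometric coefficients collapses to a single scalar multiple of $c_{n-1}(\alpha^{n-1})$.
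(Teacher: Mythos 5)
Your proposal captures the right high-level intuition: the first-order variation of $f(t)=\sum_{X_1}V_n(F_i)-\sum_{X_2}V_n(F_i)$ vanishes at a degenerate configuration, and $c_{n-1}(\alpha^{n-1})$ governs the leading second-order behavior. However, the paper's actual proof takes a noticeably different technical route, and your plan has a concrete gap that the paper's formulation is designed to avoid.

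The paper does not Taylor-expand $f(t)$ or build a quadratic form in the normal velocities. Instead, it invokes a pre-established \emph{asymptotic differential identity} (Lemma~\ref{lemma_inequality_r_s_h}, proved in \cite{Zhang:rigidity} via a Schl\"afli-type differential formula for simplices): after normalizing the $1$-stress via $\alpha_1(t)=\norm{F_1(t)}$ and rewriting $\alpha_i(t)=-\norm{F_i(t)}\cos\theta_i$, the weighted sum of $dV_n(F_i)$ is shown to be asymptotic to a nonzero constant times $c_{n-1}(\alpha^{n-1})\,d\,\overrightarrow{A_0A_1}^2$, where $\overrightarrow{A_0A_1}^2$ is (up to a factor $4$) the squared altitude of $A_1$ over the span of the remaining vertices. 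The key is that the asymptotics are taken with respect to the scalar quantity $\overrightarrow{A_0A_1}^2$ — a non-negative functional that vanishes exactly on the degenerate locus — rather than with respect to a fixed-order jet of the motion. Combined with the estimate $\cos\theta_i\pm 1 = O(\overrightarrow{A_0A_1}^2)$ and $f_0(t)/f_0'(t)\to 0$, this replaces the stress weights by the Radon signs $\pm 1$ asymptotically, and a contradiction with $f(t)\equiv\mathrm{const}$ follows.

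The gap in your plan: a smooth motion can have $\dot A_i(0)$ entirely tangent to $M^n$ and yet still lift off $M^n$ at higher order (e.g.\ normal displacement $\sim t^m$ with $m\ge 2$). In that case $v=0$ in your decomposition and $f''(0)=0$, so the quadratic form $Q(v,v)$ tells you nothing, even though the theorem's conclusion is at stake. The paper's differential formula avoids this because it tracks $d\,\overrightarrow{A_0A_1}^2$ directly, not the velocities, and the WLOG step (``both $\overrightarrow{A_0A_1}^2$ and $(\overrightarrow{A_0A_1}^2)'$ strictly increasing for small $t\ge 0$'') is precisely the device that replaces a fixed-order Taylor coefficient by a uniform asymptotic. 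A second point: the implicit function theorem is not applicable at the degenerate locus, since that locus is a \emph{critical} set for $f$ of codimension $(d-n)$-ish, while the generic level set of $f$ has codimension $1$; the level set coincides with the degenerate locus only because $f$ has a sign-definite singular structure there (like $x^2+y^2=0$), and one must argue by contradiction or by a Morse-type normal form rather than by the implicit function theorem. Finally, you correctly flag the hardest step — identifying the second-order coefficient with $c_{n-1}(\alpha^{n-1})$ — but leave it open; in the paper this is exactly the content of Lemma~\ref{lemma_inequality_r_s_h}, which is not re-derived here but imported from the earlier work.
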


To provide a simple geometric interpretation of $c_{n-1}(\alpha^{n-1})=0$ for the Euclidean case
(but not in general for the non-Euclidean case), 
we introduce the following notion.
For $n\ge 2$, with a restriction of $\mathbf{A}$ to $\mathbb{R}^n$, a degenerate $(n+1)$-simplex 
$\mathbf{B}$ in $\mathbb{R}^n$ with vertices $\{B_1, \dots, B_{n+2}\}$
is called a \emph{dual}
of $\mathbf{A}$, if it satisfies $\overrightarrow{A_iA_j}\cdot\overrightarrow{B_kB_l}=0$ for all distinct $i,j,k,l$.%
\footnote{
One simple example is in the 2-dimensional plane: if $A_1$, $A_2$, $A_3$ are the vertices of
a non-right triangle and $A_4$ is the \emph{orthocenter} of the triangle,
then $\mathbf{A}$ is dual to itself.
This notion of dual is induced from a more conventional notion of dual of
a convex polytope in $\mathbb{R}^n$,
where we leave the details to Section~\ref{section_dual}.
}
We can show that such $\mathbf{B}$ always exists
and is unique up to similarity.

Similarly to $\mathbf{A}$,
we obtain a sequence of stress invariants 
$c_0(\beta^0), \dots, c_{n+1}(\beta^{n+1})$ for $\mathbf{B}$, where $\beta$ is a 1-stress on $\mathbf{B}$,
and $\beta^k$ is induced as a $k$-stress on $\mathbf{B}$. 
We will show that numerically we can set $\beta=\alpha$.
Then we have the following result.

\begin{theorem}
\label{theorem_dual_sphere}
\emph{(Main Theorem 2)}
If $\mathbf{B}$ is a dual of $\mathbf{A}$ in $\mathbb{R}^n$,
then $c_{n-1}(\alpha^{n-1})=0$ if and only if  $c_1(\beta^1)=0$,
which is also equivalent to the vertices of $\mathbf{B}$ lying on 
an $(n-1)$-dimensional sphere in $\mathbb{R}^n$.
\end{theorem}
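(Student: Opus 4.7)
The plan is to prove the two stated equivalences separately: first, that $c_{n-1}(\alpha^{n-1})=0$ if and only if $c_1(\beta^1)=0$, via the duality between $\mathbf{A}$ and $\mathbf{B}$; second, that $c_1(\beta^1)=0$ if and only if the vertices of $\mathbf{B}$ lie on a common $(n-1)$-sphere in $\mathbb{R}^n$, via the classical sphere criterion for affinely dependent points.

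For the first equivalence, I would exploit the defining orthogonality $\overrightarrow{A_iA_j}\cdot\overrightarrow{B_kB_l}=0$ for distinct $i,j,k,l$. This says that each edge of $\mathbf{B}$ is a scalar multiple of the unique (up to sign) unit normal in $\mathbb{R}^n$ to the $(n-1)$-face of $\mathbf{A}$ opposite to $\{A_k, A_l\}$, and symmetrically the edges of $\mathbf{A}$ are normal to the complementary $(n-1)$-faces of $\mathbf{B}$. At the combinatorial level, this interchanges edges with codimension-two faces. Since $\beta=\alpha$ numerically, I would unpack the formulas for $c_{n-1}(\alpha^{n-1})$ on $\mathbf{A}$ and $c_1(\beta^1)$ on $\mathbf{B}$ from \cite{Zhang:rigidity} and show that they differ by a non-zero scalar factor. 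The expected factor is a product of $(n-1)$-face volumes (or heights) of $\mathbf{A}$, and these are nonzero precisely because the $n$-faces of $\mathbf{A}$ are assumed non-degenerate throughout.

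For the second equivalence, the degeneracy of $\mathbf{B}$ in $\mathbb{R}^n$ fixes an affine dependency $\sum_i \lambda_i B_i=0$ with $\sum_i \lambda_i=0$, unique up to scalar, and the induced 1-stress is $\beta_{ij}=\lambda_i\lambda_j$ up to scalar (since $\sum_j \lambda_i\lambda_j(B_j-B_i)=\lambda_i\sum_j\lambda_j B_j-\lambda_i B_i\sum_j \lambda_j = 0$). Writing a sphere as $|x|^2-2c\cdot x+k=0$, the linear system $|B_i|^2-2c\cdot B_i+k=0$ in the unknowns $(c,k)\in\mathbb{R}^{n+1}$ is solvable if and only if $\sum_i \lambda_i|B_i|^2=0$, since the unique left null vector of the associated coefficient matrix is precisely $\lambda$ arising from the affine dependency. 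I would then identify $c_1(\beta^1)$ with $\sum_i \lambda_i|B_i|^2$ up to a nonzero scalar by direct computation from whichever formula \cite{Zhang:rigidity} provides for 1-stress invariants.

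The main obstacle is the algebraic identification in the first equivalence. The keyword ``characteristic polynomial'' from the paper suggests that the sequence $c_0(\alpha^0),\dots,c_{n+1}(\alpha^{n+1})$ arises as the coefficients of the characteristic polynomial of some matrix $M_\alpha$ built from the edges of $\mathbf{A}$ and the stress $\alpha$. Under the orthogonality duality, I expect $M_\beta$ on $\mathbf{B}$ to be related to $M_\alpha$ by inversion or adjugate, so that the subleading coefficient $c_{n-1}$ on one side corresponds to the second coefficient $c_1$ on the other after reversing the polynomial. Verifying this matrix identity cleanly, with the correct sign and scaling (noting that $\mathbf{B}$ is only defined up to similarity so the scalar factor depends on normalization but is always nonzero), is where the bulk of the technical work should lie.
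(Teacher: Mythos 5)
Your proposal correctly identifies the same two-part structure as the paper's argument: a characteristic-polynomial duality between $(\mathbf{A},\alpha)$ and $(\mathbf{B},\beta)$ for the first equivalence, and a concyclicity criterion equivalent to $\sum_i\beta_i\,\overrightarrow{OB_i}^2=0$ for the second. For the second part your linear-system derivation and the paper's choice of $P=Q=$ circumcenter of $n+1$ of the $B_i$ in~(\ref{equaition_invariant_r_s_h_A_det}) are interchangeable routes to the same identity, so that part is fine.

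The genuine gap is in the first equivalence, which you flag as ``the bulk of the technical work'' but leave undone, and your fallback plan of ``unpacking the formulas and comparing'' cannot work as stated: $c_{n-1}(\alpha^{n-1})$ is a sum over the $\binom{n+2}{3}$ many $(n-2)$-faces of $\mathbf{A}$, while $c_1(\beta^1)$ has only $n+2$ terms, so no term-by-term proportionality is available and the matrix route is essentially forced. What you need is the concrete chain the paper supplies: with $C_1,C_2$ the $n\times n$ matrices whose $i$-th rows are $\overrightarrow{A_{n+1}A_i}$, $\overrightarrow{A_{n+2}A_i}$, with $E_1,E_2$ defined analogously for $\mathbf{B}$, and $D_1=\diag(\alpha_1,\dots,\alpha_n)$, $D_2=\diag(\beta_1,\dots,\beta_n)$, the invariants $(-1)^kc_k(\alpha^k)$ appear as signed sums of principal $k\times k$ minors of $C_2^{T}D_1C_1$ (take $P=A_{n+1}$, $Q=A_{n+2}$ in~(\ref{equaition_invariant_r_s_h_A_det})), so $f(x)/x$ and $g(x)/x$ are the characteristic polynomials of $C_2^{T}D_1C_1$ and $E_2^{T}D_2E_1$. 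The decisive step you still need is that the orthogonality condition forces $C_1E_2^{T}=E_1C_2^{T}=\diag(r_1,\dots,r_n)$ with $r_i=\overrightarrow{A_iA_j}\cdot\overrightarrow{B_iB_k}$ (which is well-defined, independent of $j,k$), and that the two affine dependencies force $\alpha_i r_i$ and $\beta_i r_i$ to be constant in $i$. Only after these observations do you get $(C_2^{T}D_1C_1)(E_2^{T}D_2E_1)=c\,I_n$, the inversion relation you correctly anticipated; without them the inversion remains a guess rather than a theorem.
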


The question remains as to what can be said about the non-Euclidean case.
As a consequence of Theorem~\ref{theorem_dual_sphere} here we give a quick example.
For $n=2$, $c_1(\alpha^1)=0$ and $c_1(\beta^1)=0$ coincide,
so by Theorem~\ref{theorem_lifting} and \ref{theorem_dual_sphere},
it means that in order for four points to be lifted from $\mathbb{R}^2$ 
to form a non-degenerate 3-simplex in $\mathbb{R}^3$
while preserving $\sum_{X_1}V_2(F_i)=\sum_{X_2}V_2(F_i)$ during the motion,
they have to move on to a common circle in $\mathbb{R}^2$ first before being lifted from $\mathbb{R}^2$.
See also Example~\ref{example_four_points}.

We introduce a notion of \emph{characteristic polynomial} of $(\mathbf{A},\alpha)$ by defining
\[f(x)=\sum_{i=0}^{n+1}(-1)^i c_i(\alpha^i)x^{n+1-i}.
\]
For the Euclidean case, by \cite[Theorem~3.4]{Zhang:rigidity}
$f(x)$ has one zero and $n$ non-zero real roots.%
\footnote{But in this paper we do not need to use the property that the roots are \emph{real}.
}
Similarly we let $g(x)$ be the characteristic polynomial of $(\mathbf{B},\beta)$.
The following result shows a duality between $f(x)$ and $g(x)$.

\begin{theorem}
\label{theorem_dual_characteristic_polynomial}
\emph{(Main Theorem 3)}
If $\mathbf{B}$ is a dual of $\mathbf{A}$ in $\mathbb{R}^n$
and the non-zero roots of $f(x)$ are $\{\lambda_1,\dots,\lambda_n\}$,
then the non-zero roots of $g(x)$ are $\{c/\lambda_1,\dots,c/\lambda_n\}$
for some constant $c$.
\end{theorem}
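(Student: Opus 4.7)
The target can be rephrased as a polynomial-reciprocity identity. Writing $f(x)=x\tilde f(x)$ and $g(x)=x\tilde g(x)$ (valid because the constant terms of $f$ and $g$ are $\pm c_{n+1}(\alpha^{n+1})$ and $\pm c_{n+1}(\beta^{n+1})$, both of which vanish for a degenerate simplex), the claim that the non-zero roots of $g$ are $c/\lambda_1,\dots,c/\lambda_n$ is equivalent to
\[
x^{n}\,\tilde f(c/x) \;=\; K\,\tilde g(x)
\]
for some non-zero constants $c$ and $K$. Expanding and matching coefficients converts this into a family of scalar identities
\[
c^{\,n-k}\,c_k(\alpha^k) \;=\; K'\,c_{n-k}(\beta^{n-k}) \qquad (0\le k\le n),
\]
where crucially $K'$ must be \emph{independent} of $k$. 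Producing this family is the real content of the theorem.

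My plan is to obtain these identities directly from the defining property $\overrightarrow{A_iA_j}\cdot\overrightarrow{B_kB_l}=0$ of the dual. First I use the numerical identification $\beta=\alpha$ promised in the paragraph before Theorem~\ref{theorem_dual_sphere}, so the scalars defining $\alpha^k$ and $\beta^k$ literally agree. Then, using the explicit construction of the induced $k$-stresses from \cite{Zhang:rigidity}, I would represent $\alpha^k$ and $\beta^k$ as tensors built from edge vectors of $\mathbf{A}$ and $\mathbf{B}$ respectively, and rewrite each invariant $c_k(\alpha^k)$ and $c_{n-k}(\beta^{n-k})$ as a pairing in the corresponding exterior/symmetric algebra. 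The orthogonality defining the dual provides a canonical linear map between these two algebras (a Hodge-type complementation scaled by $c$), and under this map $\alpha^k$ should correspond to a scalar multiple of $\beta^{n-k}$, which gives the required scalar identity above with an explicit power of $c$.

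The case $k=1$ is essentially the content of Theorem~\ref{theorem_dual_sphere}, since that theorem asserts $c_{n-1}(\alpha^{n-1})=0\iff c_1(\beta^1)=0$; retracing its proof should pin down the geometric constant $c$ and the value of $K'$ in a form that can then be exported to the higher-$k$ identities. The main obstacle is showing that $K'$ really is \emph{uniform} in $k$: each individual identification is only defined up to a $k$-dependent normalization, so one must carefully chase how $c_k$ and $\alpha^k$ are normalized through the wedge/symmetric powers and verify that all the potential $k$-dependent rescalings cancel. Once this uniformity is established, the reciprocal polynomial identity, and hence the theorem, is immediate.
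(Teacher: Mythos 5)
Your reformulation of the theorem as the family of scalar identities $c^{\,n-k}c_k(\alpha^k)=K'\,c_{n-k}(\beta^{n-k})$ with $K'$ uniform in $k$ is correct, and you rightly identify the uniformity of $K'$ as the crux. You also correctly anticipate the normalization $\beta=\alpha$. However, the proposed resolution --- a ``Hodge-type complementation'' between exterior/symmetric algebras induced by the orthogonality $\overrightarrow{A_iA_j}\cdot\overrightarrow{B_kB_l}=0$, under which $\alpha^k$ maps to a scalar multiple of $\beta^{n-k}$ --- is not an argument; it is a description of the conclusion you want. You do not construct the map, verify that it carries the $k$-stress of $\mathbf{A}$ to that of $\mathbf{B}$, or show why the normalization factor is the same power of a single constant for every $k$. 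The sentence ``once this uniformity is established, the theorem is immediate'' is accurate, but the uniformity is the entire content and your plan leaves it open.

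The paper avoids this coefficient-by-coefficient chase altogether by working with the matrices whose characteristic polynomials \emph{are} $f(x)/x$ and $g(x)/x$. Writing $C_1,C_2$ for the edge-vector matrices of $\mathbf{A}$, $E_1,E_2$ for those of $\mathbf{B}$, and $D_1,D_2$ for the diagonal stress matrices, one shows via (\ref{equaition_invariant_r_s_h_A_det}) (choosing $P=A_{n+1}$, $Q=A_{n+2}$) that the sums of $k\times k$ principal minors of $C_1C_2^T D_1$ equal $c_k(\alpha^k)$, so $f(x)/x=\det(xI_n-C_2^T D_1 C_1)$, and likewise for $g$. The duality condition then gives $C_1E_2^T=E_1C_2^T=\diag(r_1,\dots,r_n)$ with $\alpha_ir_i$ and $\beta_ir_i$ constant, whence $(C_2^T D_1 C_1)(E_2^T D_2 E_1)=c\,I_n$. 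Taking characteristic polynomials of both sides of this single matrix identity yields the reciprocal relation with one global constant, with no per-coefficient normalization to track. You should either carry your exterior-algebra map through in full (which would amount to re-deriving the principal-minor computation in tensor language) or adopt this matrix route.

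One further caution: you suggest ``retracing the proof of Theorem~\ref{theorem_dual_sphere}'' to pin down $c$ and $K'$ for $k=1$. In the paper Theorem~\ref{theorem_dual_sphere} is a \emph{corollary} of Theorem~\ref{theorem_dual_characteristic_polynomial} (combined with Proposition~\ref{proposition_common_sphere_hyperplane}), so there is no independent proof of the $k=1$ identity available to bootstrap from; as written, that step of your plan is circular.
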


Theorem~\ref{theorem_dual_characteristic_polynomial} proves
a main part of Theorem~\ref{theorem_dual_sphere},
namely $c_{n-1}(\alpha^{n-1})=0$ if and only if $c_1(\beta^1)=0$.
But Theorem~\ref{theorem_dual_characteristic_polynomial} is a more general result
and is of interest in its own right.
In fact, it shows that for any $i$ with $0\le i\le n$, 
$c_{n-i}(\alpha^{n-i})=0$ if and only if $c_i(\beta^i)=0$.

For the Euclidean case, combining Theorem~\ref{theorem_lifting} and \ref{theorem_dual_sphere} together,
we provide a different formulation to answer Question~\ref{question_lifting}
with a slightly stronger statement that includes continuous motion as well.
Notice that for the Euclidean case the initial value of $\sum_{X_1}V_n(F_i)-\sum_{X_2}V_n(F_i)$
is always 0.

\begin{theorem}
\label{theorem_lifting_dual_sphere}
Let $\mathbf{B}$ be a dual of $\mathbf{A}$ in $\mathbb{R}^n$
and assume the vertices of $\mathbf{B}$ not lying on an $(n-1)$-dimensional sphere in $\mathbb{R}^n$.
If $\mathbf{A}$ varies continuously in $\mathbb{R}^d$ with a single volume constraint that
$\sum_{X_1}V_n(F_i)-\sum_{X_2}V_n(F_i)$ is preserved as 0,
then the vertices of $\mathbf{A}$
are confined to a lower dimensional $\mathbb{R}^n$ for any sufficiently small motion.
\end{theorem}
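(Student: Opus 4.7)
The plan is to reduce the continuous statement to the smooth case of Theorem~\ref{theorem_lifting} using the real-analytic structure of the volume constraint together with the curve selection lemma for semi-analytic sets. By Theorem~\ref{theorem_dual_sphere}, the hypothesis that the vertices of $\mathbf{B}$ do not lie on an $(n-1)$-sphere is equivalent to $c_{n-1}(\alpha^{n-1})\ne 0$, so Theorem~\ref{theorem_lifting} is available for any smooth motion of $\mathbf{A}$ satisfying the constraint.

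The central step is a \emph{local} identity between the constraint locus and the degenerate locus. Let $U$ be a small open neighborhood of the initial configuration $\mathbf{A}$ in $(\mathbb{R}^d)^{n+2}$, let $D$ be the set of configurations whose vertices span an affine subspace of dimension at most $n$, and set
\[
h=\sum_{X_1}V_n(F_i)-\sum_{X_2}V_n(F_i),
\]
where $X_1,X_2$ is the partition determined by $\mathbf{A}$. Because the $n$-faces of $\mathbf{A}$ are non-degenerate, on a sufficiently small $U$ each $V_n(F_i)$ is a real-analytic function of the vertices, so $h$ is real-analytic on $U$. The inclusion $D\cap U\subset\{h=0\}\cap U$ is built into the definition of the partition; only the reverse inclusion needs work.

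I would prove the reverse inclusion by contradiction. If it failed for every choice of $U$, then $\mathbf{A}$ would lie in the closure of the semi-analytic set $Z=\{h=0\}\setminus D$, and the curve selection lemma would produce a real-analytic arc $\gamma:[0,\epsilon)\to(\mathbb{R}^d)^{n+2}$ with $\gamma(0)=\mathbf{A}$, $\gamma((0,\epsilon))\subset Z$, and (after shrinking $\epsilon$) the $n$-faces of $\gamma(t)$ non-degenerate throughout. Then $\gamma$ is a smooth motion meeting the hypotheses of Theorem~\ref{theorem_lifting}, forcing $\gamma(t)\in D$ for small $t$ and contradicting $\gamma((0,\epsilon))\cap D=\emptyset$. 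Granted the local identity $\{h=0\}\cap U=D\cap U$, any continuous motion $\mathbf{A}(t)$ with $h\equiv 0$ stays in $U$ for small $t$ by continuity, so $\mathbf{A}(t)\in D$, as claimed.

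I expect the main obstacle to be confirming that $h$ really is real-analytic across the degenerate locus; the natural formula for an unsigned $V_n(F_i)$ involves a square root of a Cayley--Menger-type determinant, which is non-smooth on its zero set. The hypothesis that each $n$-face of $\mathbf{A}$ is non-degenerate keeps $V_n(F_i)$ bounded away from $0$ on a neighborhood of $\mathbf{A}$, allowing an analytic branch to be chosen, and the signs coming from the partition $X_1,X_2$ (induced by Radon's theorem, as noted in the introduction) are precisely those which make $h$ continuous, hence real-analytic, across $D$; this is what permits curve selection to be applied to $Z$ in the first place.
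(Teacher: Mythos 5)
Your proof is correct but takes a genuinely different route from the paper's. You derive the result from the smooth-motion Theorem~\ref{theorem_lifting} by first translating the hypothesis to $c_{n-1}(\alpha^{n-1})\ne 0$ via Theorem~\ref{theorem_dual_sphere}, and then invoking the real-analyticity of $h$ near $\mathbf{A}$ together with the curve selection lemma for semi-analytic sets: either $\{h=0\}$ coincides locally with the degenerate locus $D$, which immediately gives the conclusion for any continuous constrained motion, or curve selection produces a real-analytic (hence smooth) arc in $\{h=0\}\setminus D$ emanating from $\mathbf{A}$, which contradicts Theorem~\ref{theorem_lifting}. The paper instead gives a completely elementary argument that deliberately \emph{bypasses} both Theorem~\ref{theorem_lifting} and Theorem~\ref{theorem_dual_sphere} and uses no $k$-stress at all: supposing $\mathbf{A}(t)$ lifts into $\mathbb{R}^{n+1}$, it places the tips $B_i(t)$ of the suitably signed unit face normals on the unit sphere, combines the Minkowski relation $\sum_{X_1}V_n(F_i(t))B_i(t)-\sum_{X_2}V_n(F_i(t))B_i(t)=0$ with the scalar constraint $\sum_{X_1}V_n(F_i(t))-\sum_{X_2}V_n(F_i(t))=0$ to conclude the $B_i(t)$ are affinely dependent and hence lie on an $(n-1)$-sphere, and then shows that the rescaled $\mathbf{B}(t)$ converges as $t\to 0$ to a similar copy of the dual $\mathbf{B}$, contradicting the hypothesis. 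Your approach buys modularity and potential generality: since volumes in the constant-curvature spaces are also real-analytic in the vertices, essentially the same curve-selection reduction should upgrade Theorem~\ref{theorem_lifting} from smooth to continuous motion in the non-Euclidean cases as well, where the paper has no elementary substitute. The paper's approach buys elementarity and self-containment --- no $k$-stress, no semi-analytic geometry --- which is precisely what Section~\ref{section_alternative_proof} and Remark~\ref{remark_smooth_continuous} advertise as the point of the alternative proof.
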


While Theorem~\ref{theorem_lifting_dual_sphere} is simply a statement 
that combines Theorem~\ref{theorem_lifting} and \ref{theorem_dual_sphere} together, 
somewhat surprisingly,
an alternative elementary proof of Theorem~\ref{theorem_lifting_dual_sphere}
bypasses both Theorem~\ref{theorem_lifting} and \ref{theorem_dual_sphere}
without using $k$-stress,
and is also valid for \emph{continuous} motion as well.
Thus, combined with Theorem~\ref{theorem_dual_sphere},
it also makes Theorem~\ref{theorem_lifting} valid for continuous motion for the Euclidean case
without using any advanced tools like algebraic geometry.
The interested reader may directly read this proof in Section~\ref{section_alternative_proof}
without reading earlier sections.
But for the non-Euclidean case we do not have a similarly simple answer,
thus making $k$-stress still the main tool to solve Question~\ref{question_lifting}.

In this paper we use many results developed in \cite{Zhang:rigidity},
e.g., the deriving of the stress invariant $c_{n-1}(\alpha^{n-1})$,
or more generally $c_k(\alpha^k)$ for $0\le k\le n+1$.
While we shall not repeat all the proofs, we introduce the necessary notions 
and strive to make this note self-contained and readable independently of \cite{Zhang:rigidity}.

\subsection{Background and motivations}

A \emph{flexible polyhedron} is a closed polyhedral surface in $M^n$
that admits continuous non-rigid deformations such that all faces remain rigid.
The first example of an embedded flexible polyhedron in $\mathbb{R}^3$
was discovered by Connelly \cite{Connelly:counterexample},
and the volume of all flexible polyhedra in $\mathbb{R}^3$ was shown to remain constant 
during the continuous deformation, 
proving the \emph{bellows conjecture} formulated by Connelly and D. Sullivan
(see \cite{ConnellySabitovWalz,Sabitov:invariance}).
However, in the spherical case, Alexandrov \cite{Alexandrov:flexible}
constructed a flexible polyhedron in an open half sphere $\mathbb{S}^3_{+}$
that does not conserve the volume.
Thus the validity of the bellows conjecture depends on the curvature value of the underlying space.

If we allow the faces of a polyhedron to be non-rigid but applying volume constraints on the faces instead,
then in Question~\ref{question_lifting}
the degenerate simplex $\mathbf{A}$ can be loosely treated as a variant of flexible degenerate simplex.
But unlike the bellows conjecture, the volume rigidity result of Theorem~\ref{theorem_lifting} 
does not depend on the constant curvature value of the underlying space.

Under a similar setting as that of Question~\ref{question_lifting},
we proved a weaker version of Theorem~\ref{theorem_lifting}
in \cite[Theorem~1.4]{Zhang:rigidity} with a slight reformulation below.

\begin{theorem}
\label{theorem_lifting_n_face}
If $\mathbf{A}$ varies smoothly in $M^d$
with $n+2$ volume constraints that $V_n(F_i)$ is preserved as a constant for all $n$-faces $F_i$
and $c_{n-1}(\alpha^{n-1})\ne 0$,
then the vertices are confined to a lower dimensional $M^n$ for any sufficiently small motion.
\end{theorem}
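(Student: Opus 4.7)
The plan is to reduce the theorem to a second-order infinitesimal rigidity statement and then use the induced $(n-1)$-stress to extract $c_{n-1}(\alpha^{n-1})$ as the coefficient of the obstruction. Let $A_i(t)$ be a smooth path of configurations in $M^d$ with $A_i(0)$ forming the degenerate simplex $\mathbf{A}\subset M^n$, and split each velocity along $T_{A_i(0)}M^d = T_{A_i(0)}M^n \oplus N_{A_i(0)}$ as $\dot A_i(0)=v_i^{\parallel}+v_i^{\perp}$. Because $\mathbf{A}(0)$ lies entirely in $M^n$, a first-order Taylor expansion of each face volume $V_n(F_i)$ is insensitive to the normal components $v_j^{\perp}$, so the constraint $\dot V_n(F_i)(0)=0$ for $i=1,\dots,n+2$ is a system of linear equations in the $v_j^{\parallel}$ alone. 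This is exactly the infinitesimal volume rigidity system for $\mathbf{A}$ inside $M^n$; modulo an infinitesimal isometry of $M^n$ one may therefore assume $v_j^{\parallel}=0$ for every $j$, and what remains is to show $v_j^{\perp}=0$.

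Next I pass to second order. Under the normalization $v_j^{\parallel}=0$, the Gram/Cayley--Menger expansion of $V_n(F_i)(t)$ about the degenerate configuration produces a quadratic form $Q_i$ in the normal velocities with $\ddot V_n(F_i)(0)=Q_i(v^{\perp})$; the curved analogues of Cayley--Menger give the same structure in the spherical and hyperbolic cases up to explicit curvature corrections that do not affect the leading order. The hypothesis that each $V_n(F_i)$ is preserved forces $Q_i(v^{\perp})=0$ for all $i$. Combining these $n+2$ identities with the coefficients of the induced $n$-stress $\alpha^n$ — equivalently, contracting against $\alpha$ and using the combinatorial identities that define the induced $k$-stresses in \cite{Zhang:rigidity} — collapses the sum to a single scalar identity of the shape
\[
0 \;=\; \sum_{i=1}^{n+2} \alpha^n_i\, Q_i(v^{\perp}) \;=\; c_{n-1}(\alpha^{n-1})\cdot R(v^{\perp}),
\]
where $R$ is a non-degenerate (sign-definite) quadratic form in the normal velocities, built from the pairwise inner products of the $v_j^{\perp}$ with coefficients determined by the $(n-1)$-face configuration of $\mathbf{A}$. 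Since $c_{n-1}(\alpha^{n-1})\ne 0$, this forces $R(v^{\perp})=0$ and hence $v_j^{\perp}=0$ for every $j$.

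Finally, to upgrade the infinitesimal statement to "for any sufficiently small motion," I would observe that the locus of configurations in $M^d$ satisfying the $n+2$ volume equations is a real-analytic subvariety whose tangent cone at $\mathbf{A}$, by the argument above, lies in the submanifold of configurations with all vertices in $M^n$. Any smooth germ of a path through $\mathbf{A}$ satisfying the constraints therefore remains in $M^n$ for small $t$; applying the same argument at each time slice confirms that the motion stays confined to $M^n$. The main obstacle is the explicit second-order computation that identifies the contracted sum $\sum_i \alpha^n_i\,\ddot V_n(F_i)(0)$ with $c_{n-1}(\alpha^{n-1})\cdot R(v^{\perp})$; this requires matching the combinatorial data on $(n-1)$-faces with the definition of the induced $(n-1)$-stress, and verifying in a unified way across all three constant curvatures that the lower-order curvature corrections in the Gram expansion never contribute to the coefficient of interest. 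This is the technical heart shared with the argument for Main Theorem~1, the key difference being that Theorem~\ref{theorem_lifting_n_face} has the luxury of $n+2$ separate second-order identities rather than a single signed combination.
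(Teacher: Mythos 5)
There is a genuine gap, and it sits precisely at the step you phrase as ``this is exactly the infinitesimal volume rigidity system for $\mathbf{A}$ inside $M^n$; modulo an infinitesimal isometry of $M^n$ one may therefore assume $v_j^{\parallel}=0$.'' The degenerate simplex $\mathbf{A}$ is \emph{not} infinitesimally volume-rigid in $M^n$: the tangential configuration has $(n+2)n$ coordinates minus $\dim\mathrm{Isom}(M^n)$ for the congruence group, against only $n+2$ volume constraints, so for $n\ge 2$ there is a positive-dimensional family of tangential first-order deformations that preserve every $V_n(F_i)$ yet are not infinitesimal congruences (already for $n=2$ one has $8-3=5$ degrees of freedom versus $4$ constraints). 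You therefore cannot normalize $v_j^{\parallel}=0$, and the nonzero tangential part feeds back at second order: the true second-order constraint is of the form $H_i^{\parallel}(v^{\parallel},v^{\parallel})+L_i(\ddot x^{\parallel})+Q_i(v^{\perp})=0$, and nothing in the proposal explains why the tangential contribution drops out after contracting with the stress weights. Without that, the asserted identity $\sum_i\alpha^n_i\,\ddot V_n(F_i)(0)=c_{n-1}(\alpha^{n-1})\cdot R(v^{\perp})$ does not follow.

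The second gap is the one you flag yourself: the ``explicit second-order computation'' that would produce this identity, together with the claim that $R$ is sign-definite (which in fact must be only semi-definite, with kernel exactly the coordinated normal motions that move the whole $M^n$), is left open. In the paper both issues are packaged into a single tool, Lemma~\ref{lemma_inequality_r_s_h} (Proposition~2.19 of \cite{Zhang:rigidity}, proved via the Schl\"afli differential formula). Rather than Taylor-expanding about $t=0$ and disentangling $v^{\parallel}$ from $v^{\perp}$, that lemma estimates the weighted sum of face-volume \emph{differentials along the path},
\[
\sum_i \frac{\norm{F_i(t)}}{\alpha_i(t)}\,dV_n(F_i)\;\sim\; c\cdot c_{n-1}(\alpha^{n-1})\,d\overrightarrow{A_0A_1}^2,
\]
where $\overrightarrow{A_0A_1}$ is twice the altitude of $A_1$ over the span of the remaining vertices. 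Measuring normal deviation by $\overrightarrow{A_0A_1}^2$ rather than by raw normal velocities automatically quotients out rigid motion of the carrier $M^n$, so no normalization of $v^{\parallel}$ is needed, and the asymptotic ``$\sim$'' absorbs the lower-order cross-terms uniformly in all three curvatures. Finally note that within this paper Theorem~\ref{theorem_lifting_n_face} is obtained simply by observing that if every $V_n(F_i)$ is preserved then a fortiori $\sum_{X_1}V_n(F_i)-\sum_{X_2}V_n(F_i)$ is preserved, so it is a special case of Theorem~\ref{theorem_lifting}; the full weight of the argument is carried by Lemma~\ref{lemma_inequality_r_s_h}, and what your proposal defers as ``the main obstacle'' is exactly what that lemma already supplies.
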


As a special case of Theorem~\ref{theorem_lifting},
though Theorem~\ref{theorem_lifting_n_face} 
has $n+2$ volume constraints and thus
is weaker than Theorem~\ref{theorem_lifting},
it is a somewhat surprising result itself,
because $n+2$ is still far less than the degrees of freedom of $\mathbf{A}$ in $M^d$ up to congruence. 
In fact, after Theorem~\ref{theorem_lifting_n_face} was proved,
it was gradually realized that the degrees of freedom of $\mathbf{A}$
may not be the main barrier for this particular setting.
Combined with the easy fact of $\sum_{X_1}V_n(F_i)=\sum_{X_2}V_n(F_i)$ 
(or $\sum_{X_1}V_n(F_i)=V_n(\mathbb{S}^n)$ for a special spherical case),
this inspired us to move a step further and ask Question~\ref{question_lifting}, 
which lead to the formulation of Theorem~\ref{theorem_lifting}.

Our main tools to prove Theorem~\ref{theorem_lifting} are two results we developed 
in \cite{Zhang:rigidity}, Theorem~\ref{theorem_invariant_r_s_h} 
and Lemma~\ref{lemma_inequality_r_s_h}.
In Theorem~\ref{theorem_invariant_r_s_h}, for any $k$-stress $\omega$ on a cell complex in $M^d$, 
we discovered a geometric invariant $c_k(\omega)$ associated with $\omega$,
and $c_{n-1}(\alpha^{n-1})$ is obtained as a special case.
Lemma~\ref{lemma_inequality_r_s_h} is a technical result that provides a crucial estimate of the volume differential
of the $n$-faces.

To some extent, this seemingly simple volume constraint on $\mathbf{A}$ that
$\sum_{X_1}V_n(F_i)-\sum_{X_2}V_n(F_i)$ is preserved as a constant
is a rigidity property under disguise.

\section{Classification of degenerate simplices}
\label{section_classification}

We first classify the degenerate simplices based on the size of $X_1$ and $X_2$
(see Question~\ref{question_lifting}).

\begin{description}
\item[\emph{case 0.}]
One subset (say, $X_2$) is the empty set,
which can only happen in the spherical case when the vertices are not confined to any open half sphere.
This is also the only case that $\sum_{X_1}V_n(F_i)=V_n(\mathbb{S}^n)$.
All other cases have $\sum_{X_1}V_n(F_i)=\sum_{X_2}V_n(F_i)$.
\item[\emph{case 1.}]
One subset (say, $X_2$) contains exactly one $n$-face, 
which happens when one vertex $A_i$ falls in the convex hull of the remaining vertices of $\mathbf{A}$ in $M^d$.
\item[\emph{case 2.}]
For all the remaining cases, where $X_1$ and $X_2$ each contains at least two $n$-faces.
\end{description}

To prove Theorem~\ref{theorem_lifting}, 
we provide some necessary background
in Section~\ref{section_basic}--\ref{section_differential_formula}.
But for case 0 and case 1 above,
those background is not needed and the proof is elementary.
While our general proof of Theorem~\ref{theorem_lifting} covers all cases,
to illustrate the theorem in a simple setting,
we provide this elementary proof first.

The proof of Theorem~\ref{theorem_lifting} for case 1 is trivial:
Given any non-degenerate $(n+1)$-simplex in $M^d$ with $n$-faces $F_i$, 
then for any $n$-face $F_j$
we have $\sum_{i\ne j}V_n(F_i)>V_n(F_j)$, which immediately proves case 1.
Notice that this is a \emph{global} property. 

To prove case 0, it suffices to prove the following result.

\begin{theorem}
\label{theorem_sum_inequality_spherical}
For any non-degenerate $(n+1)$-simplex $\mathbf{B}$ in $\mathbb{S}^d$ with $n$-faces $F_i$,
we have $\sum V_n(F_i)<V_n(\mathbb{S}^n)$.
\end{theorem}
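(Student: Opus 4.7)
My plan is to build $\mathbf{B}$ as a chain of nested spherically convex bodies and track the $n$-volume of the boundary at each step. Since the $n+2$ vertices of $\mathbf{B}$ span at most an $(n+2)$-dimensional linear subspace of $\mathbb{R}^{d+1}$, the simplex lies in a totally geodesic $\mathbb{S}^{n+1}\subset\mathbb{S}^d$ and the $n$-face volumes are intrinsic to this subsphere, so I reduce to the case $d=n+1$. Writing $\mathbf{B}=\bigcap_{i=1}^{n+2}H_i^{+}$ as the intersection of its bounding closed hemispheres and setting $K_k=\bigcap_{i=1}^{k}H_i^{+}$ yields a chain $\mathbb{S}^{n+1}=K_0\supset K_1\supset\cdots\supset K_{n+2}=\mathbf{B}$ of spherically convex bodies. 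Passing from $K_k$ to $K_{k+1}$ excises the cap $\partial K_k\cap H_{k+1}^{-}$ and adjoins the new facet $K_k\cap H_{k+1}$, giving
\[V_n(\partial K_{k+1})=V_n(\partial K_k)-V_n(\partial K_k\cap H_{k+1}^{-})+V_n(K_k\cap H_{k+1}).\]

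The central step is a spherical analogue of the classical cap-over-cross-section estimate: for a spherically convex body $K$ cut by a hyperplane $H$, one has $V_n(K\cap H)\le V_n(\partial K\cap H^{-})$, with equality in degenerate boundary cases (such as $K$ itself being a hemisphere whose boundary is a flat great $n$-sphere) and strict inequality otherwise. Granting this, $V_n(\partial K_{k+1})\le V_n(\partial K_k)$ for every $k\ge 1$. A direct computation yields $V_n(\partial K_1)=V_n(\partial K_2)=V_n(\mathbb{S}^n)$, because each half of a great $n$-sphere has volume $V_n(\mathbb{S}^n)/2$ regardless of the cutting hyperplane. At the third cut the cap $\partial K_2\cap H_3^{-}$ consists of pieces of the two distinct great $n$-spheres $\partial H_1^{+}$ and $\partial H_2^{+}$ while the cross-section $K_2\cap H_3$ sits in a third great $n$-sphere $\partial H_3^{+}$, and the strict form of the key estimate gives $V_n(\partial K_3)<V_n(\mathbb{S}^n)$; monotonicity then forces $V_n(\partial\mathbf{B})<V_n(\mathbb{S}^n)$.

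The principal obstacle is establishing the cap-over-cross-section estimate rigorously. In geodesic polar coordinates $(r,\omega)$ around a pole of $H$, the meridional projection sends the cap (parametrized as $r=r(\omega)$ with $r>\pi/2$) bijectively onto the cross-section $\{r=\pi/2\}$; the induced area elements are $\sin^{n-1}r\sqrt{\sin^{2}r+|\nabla r|^{2}}\,d\omega$ and $d\omega$ respectively, so the bound amounts to the integrated inequality
\[\int_{C}\bigl[\sin^{n-1}r\sqrt{\sin^{2}r+|\nabla r|^{2}}-1\bigr]\,d\omega\ge 0,\]
where $C$ is the $\omega$-projection of the cap. This is not a pointwise bound (the radial factor $\sin^{n-1}r$ can be less than one when $r$ is away from $\pi/2$), so the proof must exploit the global convexity of $K$. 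One approach is via the divergence theorem applied to an appropriate tangential vector field on $\partial K$ to convert the integrand into a boundary term on the codimension-two seam $\partial K\cap H$; an alternative is to parametrize $\partial K$ via its spherical support function and translate spherical convexity into a direct bound on $|\nabla r|$ that yields both the inequality and the strictness condition needed at the third cut.
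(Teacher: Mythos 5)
Your route is genuinely different from the paper's (which projects the vertices through the antipodal pair $B_{n+2}, B_{n+2}'$ onto an equatorial $n$-simplex, inducts on $n$, and finishes with a triangle inequality), but it is not a proof: the cap-over-cross-section estimate that the whole argument rests on is left unproven, and you say so yourself. That is a genuine gap, not a detail to fill in, because the estimate is exactly as hard as the theorem: the integrand $\sin^{n-1}r\sqrt{\sin^2 r+|\nabla r|^2}-1$ changes sign, so one must extract something global from convexity, and neither of the two sketched routes (divergence theorem on $\partial K$, or spherical support functions) is carried out.

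There are also two structural problems with the plan as written. First, the intermediate bodies $K_2=H_1^+\cap H_2^+,\dots,K_{n+1}$ are \emph{not} spherically convex in the sense your lemma needs: a lune contains the full great $(n-1)$-sphere $\partial H_1\cap\partial H_2$, hence antipodal pairs, so it is not contained in any open hemisphere, and the same holds for $K_3,\dots,K_{n+1}$ (they contain the great $(n-2)$-sphere $\partial H_1\cap\partial H_2\cap\partial H_3$, etc.). Yet the monotonicity step and, crucially, the strictness at the third cut are applied precisely to these bodies. You would need a version of the lemma valid for arbitrary intersections of closed hemispheres, and a separate argument for strictness at $K_2\to K_3$. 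Second, the meridional map you use to compare the cap to the cross-section need not be a bijection: a meridian from the pole can enter and leave $K\cap H^-$ without touching $\{r=\pi/2\}$, so part of the cap projects outside the cross-section. That part only helps the inequality, so it is fixable, but the parametrization as a single graph $r=r(\omega)$ over the cross-section has to be replaced by a more careful decomposition. Until the key estimate is proved (in the generality actually needed), what you have is a plausible program, not a proof; by contrast, the paper's antipodal-projection argument bypasses all of this with the one-line identity $V_n(F_i)+V_n(F_i')=c\,V_{n-1}(G_i)$ and an induction on $n$.
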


\begin{proof}
Let the vertices be $B_1,\dots,B_{n+2}$, and $F_i$ be the $n$-face that contains all the vertices except $B_i$.
Now let $B_{n+2}'$ be the antipodal point of $B_{n+2}$, 
and for $i<n+2$ denote by $F_i'$ the $n$-face formed by $F_i$ with vertex $B_{n+2}$ replaced by $B_{n+2}'$.

For $i<n+2$, let $C_i$ be the midpoint of the half circle with end points $B_{n+2}$ and $B_{n+2}'$
and crossing $B_i$. As $\mathbf{B}$ is non-degenerate,
then $C_1,\dots,C_{n+1}$ form a non-degenerate $n$-dimensional simplex.
Denote by $G_i$ the $(n-1)$-face formed by all the vertices $C_1,\dots,C_{n+1}$ except $C_i$.

If we treat $B_{n+2}$ and $B_{n+2}'$ as the north and south pole,
then for $i<n+2$, the $n$-dimensional region formed by the \emph{union} of $F_i$ and $F_i'$
can be cut into two regions by the upper and lower hemispheres, 
with the upper region as the \emph{join} of $G_i$ with $B_{n+2}$,
and the lower region as the join of $G_i$ with $B_{n+2}'$.
Thus 
\[V_n(F_i)+V_n(F_i')=c\cdot V_{n-1}(G_i),
\]
where the constant $c$ is $V_n(\mathbb{S}^n)/V_{n-1}(\mathbb{S}^{n-1})$.
By an induction on $n$ we have $\sum V_{n-1}(G_i)\le V_{n-1}(\mathbb{S}^{n-1})$,
where the equality holds only when $n=1$
(but we do not need to use the \emph{strict} inequality here),
thus
\begin{equation}
\label{equation_sum_F}
\sum_{i<n+2}(V_n(F_i)+V_n(F_i'))=c\cdot\sum V_{n-1}(G_i)
\le c\cdot V_{n-1}(\mathbb{S}^{n-1})=V_n(\mathbb{S}^n).
\end{equation}
As $B_{n+2}'$ and $B_1,\dots,B_{n+1}$ form a \emph{non}-degenerate $(n+1)$-simplex,
therefore 
\[\sum_{i<n+2}V_n(F_i')>V_n(F_{n+2}).
\]
Plug it in (\ref{equation_sum_F}), then
\[V_n(\mathbb{S}^n)>V_n(F_{n+2}) + \sum_{i<n+2}V_n(F_i)=\sum V_n(F_i),
\]
which finishes the proof. 
\end{proof}

Notice that Theorem~\ref{theorem_sum_inequality_spherical} is a global property as well.
However both case 0 and case 1 seem more like isolated extreme cases,
and the proof above does not indicate how to prove case 2.
In fact we can show that,
unlike case 0 and case 1 where $c_{n-1}(\alpha^{n-1})$  is always non-zero,
for case 2 $c_{n-1}(\alpha^{n-1})$  can take values positive, negative or zero as well,
and its sign depends not only on $X_1$ and $X_2$ but also on the geometric shape of $\mathbf{A}$.
In other words, case 2 is more complicated.

We next provide the background that is needed to prove case 2.

\section{Basic notions}
\label{section_basic}
As the \emph{linearity} between points in the hyperbolic space $\mathbb{H}^d$
plays an important role in this paper, 
we use the \emph{hyperboloid model} to describe $\mathbb{H}^d$ throughout the paper.
Let $\mathbb{R}^{d,1}$ be a $(d+1)$-dimensional vector space endowed with a metric
$x\cdot x=-x_0^2+x_1^2+\cdots+x_d^2$, then $\mathbb{H}^d$ is defined by
\[ \{x \in \mathbb{R}^{d,1}: x\cdot x = -1, \quad x_0 > 0\},
\]
which is the upper sheet of a two-sheeted hyperboloid.
Also let the spherical space $\mathbb{S}^d$ be the standard unit sphere centered at the origin
in $\mathbb{R}^{d+1}$. 

As it is assumed that all $n$-faces of $\mathbf{A}$ are non-degenerate, so up to a constant factor,
there is an unique sequence of non-zero coefficients
$\alpha_{1}$, \ldots, $\alpha_{n+2}\in \mathbb{R}$,
such that
\begin{equation}
\label{equation_alpha_r_s_h}
\begin{split}
&\sum\alpha_{i}A_i=0 \quad\text{and}\quad \sum\alpha_{i}=0 
\quad\text{(affine dependence for $\mathbb{R}^d$)},  \\
&\sum\alpha_{i}A_i=0 \quad\text{(linear dependence for $\mathbb{S}^d$ or $\mathbb{H}^d$)}.  \\
\end{split}
\end{equation}
We call $\alpha:=\{\alpha_1, \dots, \alpha_{n+2}\}$ a \emph{$1$-stress} on $\mathbf{A}$,
and denote the pair by $(\mathbf{A},\alpha)$ and reserve the notation for the rest of this paper.

\subsection{$k$-stress on cell complex}
\label{section_stress}

The notion of $k$-stress plays an important role in our results.
While in this paper we are only concerned with $k$-stresses 
on the boundary complex of a degenerate simplex in $M^d$,
we introduce the notion in the general setting on cell complexes (not necessarily simplicial) in $M^d$.

By a $k$-dimensional \emph{convex polytope} in $M^k$
we mean a compact subset which can be
expressed as a finite intersection of closed half spaces.
In the spherical case
for convenience we also require the convex polytope 
to be confined to an open half sphere,
so a half circle or $\mathbb{S}^0$ is not considered as a convex polytope in this context.
A \emph{cell complex} in $M^d$ is a finite set 
of convex polytopes (called \emph{cells}) in $M^d$,
such that every face (empty set included)
of a cell is also a cell in the set,
and any two cells share a unique maximal common face.
However we do not worry about overlapping or intersection
between cells in $M^d$ caused by the embedding.

If $K$ is a cell complex in $M^d$, for convenience we denote by $K$ as well the set of all its cells,
and by $K^r$ the subset of its $r$-cells.

\begin{definition}
\label{defintion_k_stress}
Consider a cell complex $K$ (not necessarily
of dimension $d-1$ or $d$) in $M^d$.
A \emph{$k$-stress} $(2\le k\le d+1)$ on $K$ is
a real-valued function $\omega$ on the $(k-1)$-cells of $K$,
such that for each $(k-2)$-cell $F$ of $K$,
\[\sum_{G\in K^{k-1},F\subset G}\omega(G)u_{\sss F,G}=0,
\]
where the sum is taken over all $(k-1)$-cells $G$ of $K$
that contain $F$,
and $u_{\sss F,G}$ is the inward unit normal to $G$ at its facet $F$.
For $k=1$, a $1$-stress is an affine dependence among the vertices
for the Euclidean case, or a linear dependence for the non-Euclidean case.
\end{definition}

The notion of (\emph{affine} and \emph{linear}) $k$-stresses was first introduced by Lee~\cite{Lee:stress}
on simplicial complexes with vertices chosen in the Euclidean space.
The notion was introduced partly under the inspiration of Kalai's proof \cite{Kalai:rigidity}
of the \emph{Lower Bound Theorem} which used classical stress.
McMullen \cite{McMullen:weights} also considered \emph{weights} on simple polytopes,
a notion dual to $k$-stresses. 
Both $k$-stresses and weights were alternative approaches to proving the $g$-theorem
for simplicial convex polytopes, whose original proof of the necessity part
by Stanley~\cite{Stanley:number}
used deep techniques from algebraic geometry.
Lee~\cite{Lee:stress} showed that
the $g$-conjecture for simplicial spheres, which remains open,
can be proved true if one can show that
for a simplicial $(d-1)$-sphere $\Delta$ with vertices chosen generically in $\mathbb{R}^d$,
the dimension of the space of affine $k$-stresses on $\Delta$ is $g_k$ for $k\le\lfloor d/2\rfloor$,
where $(g_0, g_1,\dots)$ is the $g$-vector of $\Delta$.
See, e.g., \cite{Stanley:number} for the definition of $g_k$.

Rybnikov~\cite{Rybnikov:stresses} provided a geometric variation of the notion of (affine) $k$-stress, 
extending it to cell-complexes in both Euclidean and spherical spaces.
Our notion agrees with this notion.

\subsection{$k$-stress on $\mathbf{A}$}

If $F$ is a $k$-simplex in $\mathbb{S}^d$ or $\mathbb{H}^d$ 
(which is embedded in $\mathbb{R}^{d+1}$ or $\mathbb{R}^{d,1}$ respectively)
and $B_1$, \dots, $B_{k+1}$ are the vertices, for convenience we introduce a new notation
\[\norm{F}:=|\det(B_i\cdot B_j)_{1\leq i,j\leq k+1}|^{1/2}.
\]
For the spherical case it is
$(k+1)!$ times the volume of the \emph{Euclidean} $(k+1)$-simplex
whose vertices are $O$, $B_1$, \dots, $B_{k+1}$,
and for the hyperbolic case the pseudo-volume.

\begin{definition}
\label{definition_k_stress_alpha}
Let $(\mathbf{A},\alpha)$ be as in (\ref{equation_alpha_r_s_h})
where $\alpha$ is a 1-stress on $\mathbf{A}$.
For a given $k$ $(0\leq k\leq n)$ and each
simplicial $k$-face $F$ of $\mathbf{A}$,
define a $(k+1)$-stress $\alpha^{k+1}$ on $\mathbf{A}$
by $\alpha^{k+1}(F):=(\prod_{A_s\in F}\alpha_s)k!V_k(F)$
for the Euclidean case, and
$\alpha^{k+1}(F):=(\prod_{A_s\in F}\alpha_s)\norm{F}$
for the non-Euclidean case.
\end{definition}

\begin{remark}
For notational reasons that due to the slight difference between Lee's and our notion of $(k+1)$-stresses,
we use $\alpha^{k+1}$ to denote the $(k+1)$-stress
obtained by multiplying $\alpha$ with itself for $k+1$ times and then \emph{normalized} by a
volume factor, rather than taking the value of $\prod_{A_s\in F}\alpha_s$ directly.
With the volume interpretation of $\norm{F}$ above,
it is not hard to verify that $\alpha^{k+1}$ is indeed a valid $(k+1)$-stress on $\mathbf{A}$.
\end{remark}

\section{A geometric invariant of $k$-stress}
\label{section_invariant}

As shown in Theorem~\ref{theorem_lifting}, $c_{n-1}(\alpha^{n-1})$ plays a central role
in the answer to Question~\ref{question_lifting}.
For completeness we provide the detail about how a geometric invariant $c_k(\omega)$ 
is obtained for any $k$-stress $\omega$ on a cell complex $K$.

First consider a $k$-dimensional convex polytope $F$ and any two points $P$ and $Q$ in $M^d$
in general position with respect to $F$,
and denote by $\widehat{F}$ the $(k+2)$-dimensional convex polytope in $M^d$ which is
the \emph{join} of $F$ with the segment $PQ$
(e.g., if $F$ is a $k$-simplex, then $\widehat{F}$ is a $(k+2)$-simplex).
Also let $\theta_F$ be the dihedral angle of $\widehat{F}$ at face $F$.
If $\widehat{F}$ is non-degenerate, then $\theta_F$ can vary in
such a manner that the distances between any pair of vertices of $\widehat{F}$
are fixed except between $P$ and $Q$.
It follows that $V_{k+2}(\widehat{F})$ can be treated as a function of a 
\emph{single} variable $\theta_F$, and we write the differential as
$dV_{k+2}(\widehat{F})/d\theta_F$.
Some degeneracy is allowed  and 
$\widehat{F}$ need not be a convex polytope in the strict sense,
as long as $V_{k+2}(\widehat{F})$ and $\theta_F$ can be properly defined.

We introduce the following definition.

\begin{definition}
\label{definition_partial_derivative_g_theta}
Let $F$ be a $k$-dimensional convex polytope in $M^d$
and $\widehat{F}$, $\theta_F$
be as above. If $\theta_F$ varies while all edge lengths of $\widehat{F}$ are fixed
except between $P$ and $Q$,
then define $g_{\sss F}: M^d\times M^d \rightarrow \mathbb{R}$ by
\begin{equation}
\label{equation_partial_derivative_g_theta}
g_{\sss F}(P,Q):= (k+2)!\,\frac{d V_{k+2}(\widehat{F})}{d\theta_F}.
\end{equation}
Also set $g_{\sss\varnothing}(P,Q)=1$.
\end{definition}

\begin{remark}
\label{remark_g}
When $F$ is a single point $B$, it is not hard to verify that 
for the Euclidean case we have 
$g_{\sss B}(P,Q)
=\norm{\overrightarrow{PB}}\cdot\norm{\overrightarrow{QB}}\cdot\cos\theta_B
=\overrightarrow{PB}\cdot\overrightarrow{QB}$.
For the non-Euclidean case by \cite[Corollary~2.12]{Zhang:rigidity}
\[g_{\sss B}(P,Q)
=\frac{2}{1+\kappa P\cdot Q}\overrightarrow{PB}\cdot\overrightarrow{QB}.
\]
So $g_{\sss B}$ is also \emph{approximately} the Riemannian metric at $B$
as $g_{\sss B}\sim\overrightarrow{PB}\cdot\overrightarrow{QB}$
when $P,Q\rightarrow B$,
but $g_{\sss B}$  is defined globally on $M^d$ instead of just
locally on the tangent space at $B$ as the standard Riemannian metric is.
In fact $g_{\sss B}$ is also a \emph{positive definite kernel} on $\mathbb{H}^d$ for $d=1$
\cite[Theorem~2.25]{Zhang:rigidity},
which to our knowledge is a new member to the family of known positive definite kernels,
and we conjecture for $d\ge 2$ as well.
\end{remark}

Using the Schl\"{a}fli differential formula as the main tool,
we obtained a Schl\"{a}fli differential formula for simplices based on edge lengths
\cite[Proposition~2.11]{Zhang:rigidity}
and used it to prove the following key result.
See Milnor~\cite{Milnor:Schlafli} for the description of the formula,
see also Rivin and Schlenker~\cite{RivinSchlenker} and Su{\'a}rez-Peir{\'o}~\cite{Suarez:deSitter}.

\begin{theorem}
\emph{(\cite[Theorem~2.13]{Zhang:rigidity})}
\label{theorem_invariant_r_s_h}
Let $K$ be a cell complex in $M^d$ of constant curvature $\kappa$
and $\omega$ be a $(k+1)$-stress on $k$-faces of $K$ for $k\ge 0$. 
Then as long as $g_{\sss F}(P,Q)$ is properly defined for each $F\in K^k$,
we have that
\begin{equation}
\label{equation_invariant_r_s_h}
c_{k+1}(\omega) := \sum_{F\in K^k}\omega(F)\,g_{\sss F}(P,Q) 
\end{equation}
is an invariant independent of the choice of points $P,Q\in M^d$,
and for the non-Euclidean case
\begin{equation}
\label{equation_invariant_s_h}
c_{k+1}(\omega) =\kappa (k+2)k!
\sum_{F\in K^k}\omega(F)\,V_k(F).
\end{equation}
\end{theorem}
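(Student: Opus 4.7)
The plan is to prove invariance of $c_{k+1}(\omega)$ in $P,Q$ first, uniformly across all three geometries, and separately extract the closed form (\ref{equation_invariant_s_h}) in the non-Euclidean case; the latter then re-confirms invariance in the spherical and hyperbolic settings, since its right-hand side manifestly does not depend on $P$ or $Q$.

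For invariance, fix $Q$ and vary $P$ along a smooth curve in $M^d$ (by symmetry the argument also handles $Q$). Differentiating each $g_{\sss F}(P,Q)$ in $P$ using the edge-length Schl\"afli formula \cite[Prop.~2.11]{Zhang:rigidity} applied to the join $\widehat F$, I expect the derivative to split as a linear combination over the facets $E$ of $F$ of terms proportional to the inward unit normal $u_{\sss E,F}$, with scalar factors depending only on $P,Q,E$ (not on which $F\supset E$). Interchanging the order of summation, each $(k-1)$-cell $E$ of $K$ then contributes the vector $\bigl(\sum_{F\supset E}\omega(F)\,u_{\sss E,F}\bigr)$ dotted with a factor depending only on $P,Q,E$. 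The $(k+1)$-stress relation forces the parenthesized sum to vanish at every $E$, so $d c_{k+1}/dt \equiv 0$ along every smooth curve.

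For the non-Euclidean closed form, I would start from the ordinary Schl\"afli formula
$$\kappa(k+1)\,dV_{k+2}(\widehat F)=\sum_{H}V_k(H)\,d\theta_H,$$
where $H$ runs over codim-$2$ faces of $\widehat F$. Under the constrained variation defining $g_{\sss F}$, these faces split into three families: $F$ itself; faces containing exactly one of $P,Q$; and faces containing both of $P,Q$ (so neither endpoint of the varying edge). The latter two families, once multiplied by $\omega(F)$ and summed over $F\in K^k$, produce combinations that vanish by the very same $(k+1)$-stress identity used in the invariance step above, applied at the appropriate $(k-1)$-cells of $K$. Only the $H=F$ contribution survives, and after tracking the factor $(k+2)!/(k+1)=(k+2)k!$ against the $\kappa$ coming from Schl\"afli, this matches the right-hand side of (\ref{equation_invariant_s_h}).

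The main obstacle I anticipate is combinatorial bookkeeping: matching each codim-$2$ face of every $\widehat F$ with the correct face of the base complex $K$ so that a single application of the $(k+1)$-stress identity cancels all the $P,Q$-dependent terms, and carrying through the sign conventions for $u_{\sss E,F}$ together with the normalizations relating $V_k(F)$, $\norm{F}$, and $dV_{k+2}(\widehat F)/d\theta_F$. Once the correspondence is rigidly set up the cancellation is forced, and the $k=0$ base case can be verified directly against the closed-form expressions for $g_{\sss B}(P,Q)$ recorded in Remark~\ref{remark_g}, which also fixes the leading constant in (\ref{equation_invariant_s_h}).
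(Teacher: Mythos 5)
The paper does not actually reprove this statement; it imports Theorem~2.13 wholesale from \cite{Zhang:rigidity}, remarking only that the proof there goes via the edge-length Schl\"afli formula of \cite[Prop.~2.11]{Zhang:rigidity}. So there is no in-paper proof to compare against line by line, and your task is really a reconstruction. At that level your strategy (Schl\"afli $+$ stress cancellation) is very much the right family of ideas. But there is one genuine gap and one soft spot.

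The genuine gap is in your treatment of the codimension-two faces of $\widehat F=F*PQ$ that contain \emph{both} $P$ and $Q$. These are of the form $D*PQ$ with $D$ a $(k-2)$-face of $F$, and you assert that after multiplying by $\omega(F)$ and summing over $F$ they ``vanish by the very same $(k+1)$-stress identity \dots applied at the appropriate $(k-1)$-cells of~$K$.'' But the $(k+1)$-stress identity of Definition~\ref{defintion_k_stress} only provides a vector relation $\sum_{F\supset E}\omega(F)\,u_{\sss E,F}=0$ at each $(k-1)$-cell $E$; there is no comparable relation at $(k-2)$-cells $D$, and a $(k+1)$-stress on $K$ does not in general induce one. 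So for the sum $\sum_{D}V_k(D*PQ)\sum_{F\supset D}\omega(F)\,\tfrac{d\theta_{\sss D*PQ}}{d\theta_{\sss F}}$ to cancel, you would have to exhibit a further decomposition of $\tfrac{d\theta_{\sss D*PQ}}{d\theta_{\sss F}}$ into contributions attached to the two $(k-1)$-faces $E\supset D$ of $F$, each of the form (vector depending only on $E,P,Q$)$\,\cdot u_{\sss E,F}$, and that step is neither stated nor obvious. As written, the second family ($H=E*P$, $E*Q$) and the third family ($H=D*PQ$) are not on the same footing, and lumping them together under ``the same stress identity at $(k-1)$-cells'' is the point most likely to break.

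Two smaller remarks. First, the invariance step already hinges on a decomposition you only say you ``expect'': that $\partial_P g_{\sss F}(P,Q)$ splits over facets $E$ of $F$ with scalar coefficients depending on $E,P,Q$ but not on which $F\supset E$ carries them. That is precisely the content that has to be extracted from the edge-length Schl\"afli formula, and it is where all the work lies; you are candid that this is anticipated rather than proved, but it should be flagged as the crux, not as bookkeeping. Second, your constant-tracking gives $c_{k+1}(\omega)=\tfrac{(k+2)k!}{\kappa}\sum_F\omega(F)V_k(F)$ from $\kappa(k+1)\,dV_{k+2}=\sum_H V_k(H)\,d\theta_H$, whereas the target is $\kappa(k+2)k!\sum_F\omega(F)V_k(F)$; reconciling the two uses $\kappa^2=1$, i.e.\ the normalization of $\mathbb{S}^d$ and $\mathbb{H}^d$ as the unit sphere and unit hyperboloid adopted in Section~\ref{section_basic}. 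Worth saying explicitly, since otherwise the constant looks wrong.
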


Particularly for $(\mathbf{A},\alpha)$, we have the following definition.

\begin{definition}
\label{defintion_invariant_c_k}
Let $(\mathbf{A},\alpha)$ be as in (\ref{equation_alpha_r_s_h})
where $\alpha$ is a 1-stress on $\mathbf{A}$,
and $\alpha^{k+1}$ be the $(k+1)$-stress on $\mathbf{A}$
as in Definition~\ref{definition_k_stress_alpha}.
Then by Theorem~\ref{theorem_invariant_r_s_h} we define 
a sequence of invariants
$c_1(\alpha^1), \dots, c_{n+1}(\alpha^{n+1})$ for $(\mathbf{A},\alpha)$
(also set $c_0(\alpha^0)=1$),
and for the non-Euclidean case 
\begin{equation}
\label{equation_invariant_s_h_A}
c_{k+1}(\alpha^{k+1}) =\kappa (k+2)k!
\sum_{F\subset \mathbf{A},\dim(F)=k}\left(\prod_{A_s\in F}\alpha_s\right)
\norm{F}\,V_k(F).
\end{equation}
\end{definition}

\begin{remark}
\label{remark_invariant_n_plus_1}
For the non-Euclidean case, by (\ref{equation_invariant_s_h_A})
$c_{n+1}(\alpha^{n+1})$ vanishes unless
$\mathbf{A}$ is not confined to any open half sphere in the spherical case (case 0),
and $c_{n+1}(\alpha^{n+1})$ also vanishes for the Euclidean case
as a limit of the spherical case.
However for case 0, we can set $\alpha$ such that $\alpha_i>0$ for all $i$, 
then all $c_{k+1}(\alpha^{k+1})$ are positive, including $c_{n+1}(\alpha^{n+1})$.
\end{remark}

\section{A differential formula}
\label{section_differential_formula}

Here we provide a differential formula in Lemma~\ref{lemma_inequality_r_s_h},
a crucial estimate of the volume differential of the $n$-faces and
an important step for proving Theorem~\ref{theorem_lifting}.
Denote by $\mathbf{A}(t)$ the smooth motion of $\mathbf{A}$ in $M^d$,
and $\mathbf{A}(0)=\mathbf{A}$ the initial position.

Let $A_0(t)$ in $M^d$ be the mirror reflection of $A_1(t)$ through
a lower dimensional $M^n$ that contains points $A_2(t),\ldots,A_{n+2}(t)$.
If $A_0(t)\ne A_1(t)$, 
then $\overrightarrow{A_0A_1}$ (short for $\overrightarrow{A_0(t)A_1(t)}$)
is twice the altitude vector for $A_1(t)$ with respect to the 
linear (resp. affine) span of $A_i(t)$ of $i\ge 2$
for the non-Euclidean (resp. Euclidean) case.
It is not hard to see that if $\mathbf{A}(t)$ varies smoothly over $t$,
then $A_0(t)$ varies smoothly as well,
thus $\overrightarrow{A_0A_1}^2$ also varies smoothly.

For $t\ge 0$, $\alpha_i$ can be extended to a continuous function $\alpha_i(t)$ 
with $\alpha_{i}(0)=\alpha_{i}$ (and additionally $\sum_{i\ge 1}\alpha_i(t)=0$ for the Euclidean case),
such that
$\sum_{i\ge 1}\alpha_i(t)A_i(t)$ is a multiple of $\overrightarrow{A_0A_1}$.
Denote $\{\alpha_1(t),\dots,\alpha_{n+2}(t)\}$ by $\alpha_t$.
For a fixed $t$, $\alpha_t$ is unique up to a constant factor.
We have the following formula.

\begin{lemma}
\emph{(\cite[Proposition~2.19]{Zhang:rigidity})}
\label{lemma_inequality_r_s_h}
Let $\mathbf{A}(t)$, $\alpha_t$, and $A_0(t)$  be as above.
Assume $\mathbf{A}(t)$ varies smoothly for $t\ge 0$ in $M^d$.
If $c_{k-1}(\alpha^{k-1})\ne 0$ and 
both $\overrightarrow{A_0A_1}^2$ and $(\overrightarrow{A_0A_1}^2)'$ 
are strictly increasing for small $t\ge 0$, 
then for the non-Euclidean case
\begin{equation}
\label{equation_inequality_s_h}
2\cdot k! \sum_{\substack{G\subset \mathbf{A}(t)\\ \dim(G)=k}}
\left(\prod_{A_s(t)\in G}\alpha_s(t)\right)
\,\norm{G}\,dV_k(G)
\sim -\frac{1}{4}\alpha_1^2 c_{k-1}(\alpha^{k-1})\,d\overrightarrow{A_0A_1}^2,
\end{equation}
and for the Euclidean case
\begin{equation*}
%\label{equation_inequality_r}
2\cdot (k!)^2\sum_{\substack{G\subset \mathbf{A}(t)\\ \dim(G)=k}}
\left(\prod_{A_s(t)\in G}\alpha_s(t)\right)
\,V_k(G)\,dV_k(G)
\sim -\frac{1}{4}\alpha_1^2 c_{k-1}(\alpha^{k-1})\,d\overrightarrow{A_0A_1}^2.
\end{equation*}
\end{lemma}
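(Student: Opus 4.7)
The plan is to derive the formula by applying the Schl\"afli differential formula face-by-face on the left-hand side and then matching the result against the invariant $c_{k-1}(\alpha^{k-1})$ via Theorem~\ref{theorem_invariant_r_s_h}, evaluated at the auxiliary pair of points $(P,Q)=(A_0(t),A_1(t))$. These two points coincide at $t=0$ and separate along the normal to the degenerate $M^n$ as $t$ grows, which is what drives the whole asymptotic.

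First I would apply the edge-length Schl\"afli differential formula \cite[Proposition~2.11]{Zhang:rigidity} to each $k$-face $G$ of $\mathbf{A}(t)$, expressing $dV_k(G)$ as a linear combination of squared-edge differentials $d\ell_{ij}^2$ over pairs $A_i,A_j\in G$. Weighting by $\alpha^{k+1}_t(G)=\bigl(\prod_{A_s\in G}\alpha_s(t)\bigr)\|G\|$ and summing over all $k$-faces produces a double sum indexed by edges $\{A_i,A_j\}$ and by codimension-two sub-faces $F$ of $G$. At $t=0$, applying the defining stress identity for $\alpha^{k-1}$ at each $(k-2)$-face $F$ causes the contributions from every edge $\{A_i,A_j\}$ that is not forced by the degeneracy to telescope away; what survives is precisely the term associated with the ``virtual'' edge from $A_0$ to $A_1$, whose squared length differential is $d\overrightarrow{A_0A_1}^2$.

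The next step is to identify the coefficient of this surviving term with $-\tfrac14\alpha_1^2\,c_{k-1}(\alpha^{k-1})$. Applying Theorem~\ref{theorem_invariant_r_s_h} with $P=A_0(t)$ and $Q=A_1(t)$ gives $c_{k-1}(\alpha^{k-1})=\sum_{F}\alpha^{k-1}(F)\,g_F(A_0,A_1)$, the sum being over $(k-2)$-faces $F$ of $\mathbf{A}$. For each such $F$, the join $\widehat F=F*A_0A_1$ is a $k$-simplex whose dihedral angle $\theta_F$ at $F$ is controlled at leading order by $|\overrightarrow{A_0A_1}|$; expanding $g_F(A_0,A_1)=k!\,dV_k(\widehat F)/d\theta_F$ via the asymptotic form given in Remark~\ref{remark_g}, and collecting the factor $\alpha_1$ that emerges twice from the normalization of $\alpha_t$ (once from the sub-face $F\cup\{A_1\}$ and once from the stress identity at $F$), produces the prefactor $-\tfrac14\alpha_1^2$.

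The main obstacle I anticipate is the asymptotic bookkeeping behind the $\sim$ symbol: one must check that the higher-order corrections in $t$ to both $\alpha_s(t)$ and to the Schl\"afli expansion of $dV_k(G)$ contribute only $o\bigl(d\overrightarrow{A_0A_1}^2\bigr)$, so that they are absorbed into the tilde equivalence. This is exactly where the two monotonicity hypotheses on $\overrightarrow{A_0A_1}^2$ and on $(\overrightarrow{A_0A_1}^2)'$ enter, pinning the correct leading scale. The choice of $\alpha_t$, uniquely determined up to a constant by requiring $\sum_i\alpha_i(t)A_i(t)$ to be parallel to $\overrightarrow{A_0A_1}$, turns out to force $\alpha_s(t)-\alpha_s$ to be of order $\overrightarrow{A_0A_1}^2$ rather than $|\overrightarrow{A_0A_1}|$, so that no spurious first-order perturbation of $\alpha$ contaminates the leading coefficient.
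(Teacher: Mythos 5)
The paper does not actually prove this lemma: it is quoted verbatim as \cite[Proposition~2.19]{Zhang:rigidity}, and the introduction explicitly says the proofs imported from \cite{Zhang:rigidity} are not repeated here. So there is no ``paper's own proof'' in this document to compare your proposal against; any judgment has to rest on whether the ingredients you invoke --- the edge-length Schl\"afli formula, the $(k+1)$-stress identity, and Theorem~\ref{theorem_invariant_r_s_h} --- actually fit together in the way you describe.

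On that score the outline points in a sensible direction, but it has a gap at exactly the point that makes the lemma hard. After expanding each $dV_k(G)$ via the edge-length Schl\"afli formula and summing against $\alpha_t^{k+1}$, the double sum is indexed by pairs $\{A_i,A_j\}$ with $i,j\ge 1$; the phantom point $A_0(t)$ is not a vertex of $\mathbf{A}(t)$ and never occurs among those edges. You then assert that ``what survives is precisely the term associated with the `virtual' edge from $A_0$ to $A_1$,'' but you do not say how a sum written entirely in the genuine squared edge lengths $\ell_{ij}^2$ produces a term in $d\overrightarrow{A_0A_1}^2$. This is the crux: the $\alpha^{k-1}$-stress identity holds exactly only at $t=0$, and for $t>0$ it fails by an amount controlled by $\overrightarrow{A_0A_1}$; turning that failure into the advertised $d\overrightarrow{A_0A_1}^2$ contribution requires a concrete device (for instance, passing to the $(n+3)$-point configuration $\{A_0(t),A_1(t),\dots,A_{n+2}(t)\}$, which \emph{does} carry an honest $1$-stress in which $A_0$ and $A_1$ play symmetric roles, and comparing its $k$-stress identities with those of $\mathbf{A}(t)$), and that device is absent from your sketch. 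Similarly, the prefactor $-\tfrac14\alpha_1^2$ is asserted (``emerges twice from the normalization'') rather than derived, and the claim that $\alpha_s(t)-\alpha_s=O(\overrightarrow{A_0A_1}^2)$ --- plausible by the reflection symmetry $A_0\leftrightarrow A_1$, and indeed consistent with the $\cos\theta_i\pm1=O(\overrightarrow{A_0A_1}^2)$ estimate used later in the paper's own proof of Theorem~\ref{theorem_lifting} --- is stated without argument. Until the mechanism by which $\overrightarrow{A_0A_1}$ enters the expansion is made explicit, what you have is a plan, not a proof.
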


\begin{remark}
Here the notation ``$\sim$'' means that if the two sides of the formula above
are written as $f_1(t)dt$ and $f_2(t)dt$ instead,
then $f_1(t)-f_2(t)=o(f_2(t))$ as $t\rightarrow 0$.
\end{remark}

For the purpose of this paper, we only need the formula for case $k=n$,
and the proof of Theorem~\ref{theorem_lifting} essentially follows from Lemma~\ref{lemma_inequality_r_s_h}.

\section{Proof of Theorem~\ref{theorem_lifting}}
\label{section_proof_lifting}

\begin{theorem}
\emph{(Theorem~\ref{theorem_lifting})}
If $\mathbf{A}$ varies smoothly in $M^d$ with a single volume constraint that
$\sum_{X_1}V_n(F_i)-\sum_{X_2}V_n(F_i)$ is preserved as a constant (0 or $V_n(\mathbb{S}^n)$)
and $c_{n-1}(\alpha^{n-1})\ne 0$,
then the vertices are confined to a lower dimensional $M^n$ for any sufficiently small motion.
\end{theorem}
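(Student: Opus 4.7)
By the classification in Section \ref{section_classification}, it suffices to treat Case 2, where both $X_1$ and $X_2$ contain at least two $n$-faces; Cases 0 and 1 are already disposed of by Theorem \ref{theorem_sum_inequality_spherical} and an elementary triangle-type inequality. For Case 2 I argue by contradiction, using Lemma \ref{lemma_inequality_r_s_h} with $k=n$ as the central tool.

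Suppose, for contradiction, that the vertices of $\mathbf{A}(t)$ are not confined to any lower-dimensional $M^n$ for $t$ arbitrarily close to $0$. After relabeling, some vertex $A_1$ leaves the $M^n$ spanned by the others on a sequence $t\to 0^+$; by restricting to a one-parameter real-analytic curve transverse to the degenerate locus, I may arrange that both $\overrightarrow{A_0A_1}^2(t)$ and its derivative are strictly increasing on some $[0,\epsilon)$, so that the hypotheses of Lemma \ref{lemma_inequality_r_s_h} hold. The central observation is that, at $t=0$, the $(n+1)$-stress value $\omega_0(F_i):=\alpha^{n+1}(F_i)$ is proportional to $\epsilon_i:=\sgn(\alpha_i)$: since $\alpha$ is a null vector of the symmetric, nullity-one Gram matrix $(A_i\cdot A_j)$ (non-Euclidean case) or the homogeneous-coordinate matrix (Euclidean case), the adjugate of that matrix is rank one and proportional to $\alpha\alpha^T$; reading diagonal entries gives $\norm{F_i}_{t=0}=c|\alpha_i|$ (respectively $V_n(F_i(0))=c'|\alpha_i|$) for a common positive constant, so $\omega_0(F_i)=C\epsilon_i$ for a common nonzero constant $C$. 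A direct check via Radon's theorem shows that $\epsilon_i$ coincides up to a global sign with the partition sign $\eta_i$ distinguishing $X_1$ from $X_2$.

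Now apply Lemma \ref{lemma_inequality_r_s_h} at $k=n$. Its right-hand side
\[
f_2(t)=-\tfrac{1}{4}\alpha_1^2\,c_{n-1}(\alpha^{n-1})\,(\overrightarrow{A_0A_1}^2)'(t)
\]
has nonzero leading $t$-coefficient, since $c_{n-1}(\alpha^{n-1})\neq 0$ and $(\overrightarrow{A_0A_1}^2)''(0)>0$. On the left-hand side I split
\[
f_1(t)=2\,n!\,C\sum_i \epsilon_i\,V_n'(F_i(t))+2\,n!\sum_i\bigl(\omega_t(F_i)-\omega_0(F_i)\bigr)V_n'(F_i(t));
\]
the first sum vanishes identically in $t$ by differentiating the single volume constraint $\sum_i\eta_i V_n(F_i(t))=\text{const}$, so only the correction term survives. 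The goal is to show that this correction is $o\bigl((\overrightarrow{A_0A_1}^2)'(t)\bigr)$, which contradicts the asymptotic equivalence $f_1\sim f_2$ asserted by the lemma and forces $\overrightarrow{A_0A_1}\equiv 0$ on $[0,\epsilon)$.

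The main obstacle is precisely this last step: naively $\omega_t(F_i)-\omega_0(F_i)=O(t)$ and $V_n'(F_i)=O(1)$, so the correction is $O(t)$, i.e.\ the same order as $f_2$. The needed extra cancellation should come from combining the higher-order consequences $\sum_i\eta_i V_n^{(k)}(F_i(0))=0$ of the constraint with the stress-balance equations satisfied by $\omega_t$ on the perturbed complex $\mathbf{A}(t)$, together with the defining condition on $\alpha_t$ (that $\sum_i \alpha_i(t)A_i(t)\propto\overrightarrow{A_0A_1}$, which ties its first-order variation in $t$ directly to the out-of-plane direction). Showing that these ingredients combine to kill the leading $t$-coefficient of $f_1$ — so that the asymptotic identity of Lemma \ref{lemma_inequality_r_s_h} forces $(\overrightarrow{A_0A_1}^2)'\equiv 0$ at leading order — is the heart of the argument and where all the hypotheses ($c_{n-1}(\alpha^{n-1})\neq 0$ and the single volume constraint) interact.
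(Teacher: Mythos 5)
Your setup matches the paper's: argue by contradiction, arrange that $\overrightarrow{A_0A_1}^2$ and its derivative are strictly increasing, and invoke Lemma~\ref{lemma_inequality_r_s_h} with $k=n$. The zeroth-order identification you make --- that with a suitable normalization $\alpha^{n+1}(F_i)$ at $t=0$ is a common constant times $\sgn(\alpha_i)$, and that this sign pattern is exactly the $X_1/X_2$ partition so that the leading sum is killed by the single volume constraint --- is also correct. But you have honestly flagged the gap yourself, and it is a genuine one: you cannot conclude from $\omega_t(F_i)-\omega_0(F_i)=O(t)$ and $V_n'(F_i)=O(1)$ that the correction is negligible, since the right-hand side $-\tfrac14\alpha_1^2\,c_{n-1}(\alpha^{n-1})(\overrightarrow{A_0A_1}^2)'$ can also be $O(t)$. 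The route you sketch for closing it (higher-order derivatives $\sum_i\eta_i V_n^{(k)}(F_i(0))=0$ plus stress-balance on the perturbed complex) is not what makes the argument go and would be substantially harder to carry out.

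The paper's resolution is more elementary and quite different from what you propose. First, fix the normalization $\alpha_1(t):=\norm{F_1(t)}$; then $\alpha_i(t)=-\norm{F_i(t)}\cos\theta_i$ where $\theta_i$ is the dihedral angle between $F_i(t)$ and $F_1(t)$. Factoring $2\,n!\prod_s\alpha_s(t)$ out of the entire left-hand side of (\ref{equation_inequality_s_h}) leaves the coefficient of $dV_n(F_i)$ as $\norm{F_i(t)}/\alpha_i(t)=-1/\cos\theta_i$. The \emph{$i$-dependent} part of the correction is therefore controlled not by $O(t)$ but by $\cos\theta_i\pm 1 = O(\overrightarrow{A_0A_1}^2)$, since $\sin^2\theta_i = O(\overrightarrow{A_0A_1}^2)$. (The $i$-independent factor $\prod_s\alpha_s(t)$ only perturbs the overall constant.) Second, because $(\overrightarrow{A_0A_1}^2)'$ is increasing, one has the crude but decisive bound $\overrightarrow{A_0A_1}^2(t)\le t\,(\overrightarrow{A_0A_1}^2)'(t)$, hence $\overrightarrow{A_0A_1}^2 = o\bigl((\overrightarrow{A_0A_1}^2)'\bigr)$; so corrections of size $O(\overrightarrow{A_0A_1}^2)$ in the coefficients are negligible against the right-hand side. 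That reduces (\ref{equation_inequality_cos_theta}) to $\sum_{X_1}dV_n(F_i)-\sum_{X_2}dV_n(F_i)\sim c\cdot c_{n-1}(\alpha^{n-1})\,d\overrightarrow{A_0A_1}^2$, whose left side vanishes by the constraint, giving the contradiction. These two ingredients --- isolating the ratio $-1/\cos\theta_i$ by factoring out $\prod_s\alpha_s(t)$, and the integral comparison $f_0(t)/f_0'(t)\to 0$ --- are exactly what your draft is missing. A secondary point: you invoke a ``real-analytic curve transverse to the degenerate locus,'' but the hypotheses only grant smoothness; the paper's WLOG reduction is stated directly for the smooth setting.
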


We use the same notations as in Section~\ref{section_differential_formula},
including  $\mathbf{A}(t)$, $\alpha_t$, and $A_0(t)$.
Here we only provide the proof for the non-Euclidean case, as the Euclidean case can be treated similarly,
and both as a consequence of Lemma~\ref{lemma_inequality_r_s_h}.

\begin{proof}
As $\mathbf{A}(t)$ is smooth, both $\overrightarrow{A_0A_1}^2$ and
$(\overrightarrow{A_0A_1}^2)'=2\overrightarrow{A_0A_1}\cdot(\overrightarrow{A_0A_1})'$
are 0 at $t=0$.
If the vertices are not confined to a lower dimensional $M^n$ for some small motion,
without loss of generality we assume that both $\overrightarrow{A_0A_1}^2$
and $(\overrightarrow{A_0A_1}^2)'$ are strictly increasing for small $t\ge 0$.

Let $\alpha_1(t):=\norm{F_1(t)}$ where $F_i(t)$ are the $n$-faces of  $\mathbf{A}(t)$.
As $\alpha_t$ is unique up to a constant factor and now $\alpha_1(t)$ is fixed, 
so $\alpha_t$ is also fixed for all small $t\ge 0$.
In fact, for $i\ge 2$ if $\theta_i$ (short for $\theta_i(t)$) is the dihedral angle between
$n$-faces $F_i(t)$ and $F_1(t)$, then $\alpha_i(t)=-\norm{F_i(t)}\cos\theta_i$.
Notice that $\theta_i$ is in the neighborhood of either $0$ or $\pi$.
As a convention also set $\theta_1=\pi$.
Assume $c_{n-1}(\alpha^{n-1})\ne 0$ in the following.

In Lemma~\ref{lemma_inequality_r_s_h},
on the left side of (\ref{equation_inequality_s_h}) take $k=n$, factoring out 
$2\cdot n! \prod_{i\ge 1}\alpha_i(t)$ and replacing $G$ with an $n$-face $F_i(t)$ of $\mathbf{A}(t)$,
we have the coefficient of $dV_n(F_i)$ as $\norm{F_i(t)}/\alpha_i(t)$. Namely for small $t\ge 0$,
\begin{equation}
\label{equation_inequality_s_h_n}
\sum\frac{\norm{F_i(t)}}{\alpha_i(t)}\,dV_n(F_i)
\sim c\cdot c_{n-1}(\alpha^{n-1})\,d\overrightarrow{A_0A_1}^2
\end{equation}
for a non-zero constant $c$.

As mentioned above we have $\alpha_i(t)=-\norm{F_i(t)}\cos\theta_i$ 
(including $i=1$ where $\theta_1=\pi$ and $\alpha_1(t)=\norm{F_1(t)}$), thus
\begin{equation}
\label{equation_inequality_cos_theta}
\sum-\frac{1}{\cos\theta_i}\,dV_n(F_i)
\sim c\cdot c_{n-1}(\alpha^{n-1})\,d\overrightarrow{A_0A_1}^2.
\end{equation}

As $\overrightarrow{A_0A_1}$ is twice the altitude vector for $A_1(t)$
with respect to the linear span of $F_1(t)$, we have
$\sin^2\theta_i=O(\overrightarrow{A_0A_1}^2)$ for all $i\ge 1$, and thus
\[(1+\cos\theta_i)(1-\cos\theta_i)=\sin^2\theta_i=O(\overrightarrow{A_0A_1}^2).
\]
For each $i\ge 1$, with a properly chosen sign of $\pm 1$ depending only on whether $F_i$ is in $X_1$ or $X_2$,
we have
\begin{equation}
\label{equation_inequality_cos_theta_1}
\cos\theta_i\pm 1=O(\overrightarrow{A_0A_1}^2).
\end{equation}

Recall that at the beginning of the proof, 
we assume that both $\overrightarrow{A_0A_1}^2$ and $(\overrightarrow{A_0A_1}^2)'$
are strictly increasing for small $t\ge 0$.
Denote $\overrightarrow{A_0A_1}^2$ by $f_0(t)$, then 
\[f_0(t)=\int_0^t f_0'(t)dt\le t\cdot f_0'(t).
\]

Thus $f_0(t)/f_0'(t)\rightarrow 0$ as $t\rightarrow 0$.
As the right side of (\ref{equation_inequality_cos_theta}) 
is in the order of $f_0'(t)dt$ for small $t>0$,
so on the left side any change in the coefficients in the order of $O(f_0(t))$ can be ignored.
Then on the left side of (\ref{equation_inequality_cos_theta})
replacing $\cos\theta_i$ with a proper $\pm 1$ from (\ref{equation_inequality_cos_theta_1}),
for small $t\ge 0$

\begin{equation}
\label{equation_inequality_sum}
\sum_{X_1}dV_n(F_i)-\sum_{X_2}dV_n(F_i)
\sim c\cdot c_{n-1}(\alpha^{n-1})\,d\overrightarrow{A_0A_1}^2.
\end{equation}
This contradicts the assumption that
$\sum_{X_1}V_n(F_i)-\sum_{X_2}V_n(F_i)$ is preserved as a constant.
Thus the vertices are confined to a lower dimensional $M^n$ for small $t\ge 0$,
and this completes the proof.
\end{proof}

When $c_{n-1}(\alpha^{n-1})\ne 0$, 
(\ref{equation_inequality_sum}) implies that --- if we ignore the smoothness
requirement for a moment --- for any \emph{non}-degenerate $(n+1)$-simplex
in a small neighborhood of $\mathbf{A}$,  we always have 
$\sum_{X_1}V_n(F_i)\ne\sum_{X_2}V_n(F_i)$
(or $\sum_{X_1}V_n(F_i)\ne V_n(\mathbb{S}^n)$ for case 0),
and the strict inequality is fixed as either ``$>$'' or ``$<$''
that only depends on the sign of $c_{n-1}(\alpha^{n-1})$.

\begin{remark}
For case 2, 
further computation shows that even with fixed $X_1$ and $X_2$, 
$c_{n-1}(\alpha^{n-1})$ can take values positive, negative or zero as well; and
starting with any configuration with a non-zero $c_{n-1}(\alpha^{n-1})$,
through degenerate $(n+1)$-simplices only,
up to congruence
it can deform to any configuration with the same $X_1$ and $X_2$
and a zero $c_{n-1}(\alpha^{n-1})$,
and from there can be lifted to form a non-degenerate simplex.
Thus Theorem~\ref{theorem_lifting} for 
case 2 is a \emph{local} property, 
and cannot be strengthened by replacing the statement
``for any sufficiently small motion'' with ``for any motion''.
\end{remark}

\section{Geometric interpretations}
\label{section_geometric_interpretations}

As shown in Theorem~\ref{theorem_lifting}, $c_{n-1}(\alpha^{n-1})=0$ 
is the critical position that
$\mathbf{A}$ may be lifted from $M^n$ to form a non-degenerate $(n+1)$-simplex.
The main purpose of this section is to prove 
Theorem~\ref{theorem_dual_sphere} and \ref{theorem_dual_characteristic_polynomial},
which provide a simple geometric interpretation of $c_{n-1}(\alpha^{n-1})=0$ for the Euclidean case.
The main idea is to use matrix theory to prove Theorem~\ref{theorem_dual_characteristic_polynomial} first,
and then prove Theorem~\ref{theorem_dual_sphere} next.
For the non-Euclidean case, while an explicit formula for $c_{n-1}(\alpha^{n-1})$ is provided
in (\ref{equation_invariant_s_h_A}),
we lack a nice geometric interpretation of $c_{n-1}(\alpha^{n-1})=0$ except for $n=2$.

We first revisit some notions.

\subsection{Dual of $\mathbf{A}$ in $\mathbb{R}^n$}
\label{section_dual}

For $n\ge 2$, with a restriction of $\mathbf{A}$ to $\mathbb{R}^n$, a degenerate $(n+1)$-simplex 
$\mathbf{B}$ in $\mathbb{R}^n$ with vertices $\{B_1, \dots, B_{n+2}\}$
is called a \emph{dual} of $\mathbf{A}$,
if it satisfies $\overrightarrow{A_iA_j}\cdot\overrightarrow{B_kB_l}=0$ for all distinct $i,j,k,l$.
In the following we show that such $\mathbf{B}$ always exists
and is unique up to similarity.

Without loss of generality, let $A_{n+2}$ be the origin $O$ in $\mathbb{R}^n$, 
and $F_i$ be the $n$-face of $\mathbf{A}$ that does not contain the vertex $A_i$.
Fix a non-zero constant $c$.
For any $i\le n+1$, denote by $G_i$ the $(n-1)$-face $F_{n+2}\setminus\{A_i\}$.
Then there is a unique point $B_i$ in $\mathbb{R}^n$, 
such that $\overrightarrow{OB_i}$ is perpendicular to $G_i$,
and for any $j\le n+1$ with $j\ne i$, we have 
$\overrightarrow{OB_i}\cdot\overrightarrow{OA_j}=c$.
Finally let $B_{n+2}=O$ and $\mathbf{B}$ be a degenerate $(n+1)$-simplex 
in $\mathbb{R}^n$ with vertices $\{B_1, \dots, B_{n+2}\}$.
Then 
\begin{equation}
\label{equation_A_B}
\overrightarrow{B_{n+2}B_i}\cdot\overrightarrow{A_{n+2}A_j}=c.
\end{equation}
If $i,j,k$ and $n+2$ are distinct, in (\ref{equation_A_B})
replace $j$ with $k$ and subtract from it, then 
$\overrightarrow{B_{n+2}B_i}\cdot\overrightarrow{A_jA_k}=0$.
Similarly for distinct $i,j,k,l$, we have 
$\overrightarrow{B_iB_l}\cdot\overrightarrow{A_jA_k}=0$,
which verifies that $\mathbf{B}$ is a dual of $\mathbf{A}$.
By the construction of $\mathbf{B}$, it is not hard to observe that $\mathbf{B}$ 
is also unique up to similarity.

If we denote by $E_i$ the $n$-face of $\mathbf{B}$ that does not contain the vertex $B_i$,
then as a \emph{non-degenerate} simplex, $E_{n+2}$ is a \emph{dual}%
\footnote{This more conventional notion of dual is defined on any convex polytope $P$
in $\mathbb{R}^n$ by the following construction:
$P^{\ast}=\{y \in \mathbb{R}^n: x\cdot y \le c \quad\text{for all $x\in P$}\}$
with $c>0$ and a requirement that $P$ contains the origin $O$ in its interior.
But for a non-degenerate \emph{simplex} in $\mathbb{R}^n$,
if we are only concerned with the location of the vertices and not its \emph{interior},
then it can get away with this requirement (that $P$ contains the origin $O$)
as long as the origin $O$ is not on any hyperplane that contains an $(n-1)$-face.
}
of $F_{n+2}$ in $\mathbb{R}^n$ with respect to the origin $O$ (which is also $A_{n+2}$ and $B_{n+2}$ the same time),
a notion that in fact induces the notion of dual of a \emph{degenerate} simplex in $\mathbb{R}^n$.

\subsection{Properties of the characteristic polynomial}
\label{section_characteristic_polynomial}

Recall that the \emph{characteristic polynomial} of $(\mathbf{A},\alpha)$ is defined by
\[f(x)=\sum_{i=0}^{n+1}(-1)^i c_i(\alpha^i)x^{n+1-i}.
\]
For the Euclidean case, we showed in \cite[Theorem~3.4]{Zhang:rigidity} 
that $f(x)$ has one zero and $n$ non-zero real roots,
thus $c_{n+1}(\alpha^{n+1})$ is always 0.
But $c_{n+1}(\alpha^{n+1})$ is non-zero for a special spherical case (See Remark~\ref{remark_invariant_n_plus_1}),
so for the generality of $f(x)$,
we keep $(-1)^{n+1} c_{n+1}(\alpha^{n+1})$ as the constant term of $f(x)$.

For the rest of this section we consider the Euclidean case only.

For a $k$-simplex $F$ and two points $P$ and $Q$ in $\mathbb{R}^d$,
for convenience of computation,
we introduce a new notation $d_{\sss F}(P,Q)$.

\begin{definition}
\label{definition_partial_derivative_d}
For a $k$-simplex $F$ in $\mathbb{R}^d$,
define $d_{\sss F}(P,Q)$ by $k!V_k(F)\,g_{\sss F}(P,Q)$,
where $g_{\sss F}(P,Q)$ is defined in Definition~\ref{definition_partial_derivative_g_theta}.
Also set $d_{\sss\varnothing}(P,Q)=1$.
\end{definition}

\begin{remark}
Unlike the definition of $g_{\sss F}$ where $F$ need be non-degenerate,
here $d_{\sss F}$ is well defined when $F$ is degenerate,
and $P$ and $Q$ can be \emph{any} points as well. See below.
\end{remark}

If the vertices of $F$ are $P_1,\dots,P_{k+1}$,
then by \cite[(3.2), (2.5)]{Zhang:rigidity} we have

\begin{equation}
\label{equation_prod_r_matrix}
d_{\sss F}(P,Q) =
\det(\overrightarrow{PP_i}\cdot\overrightarrow{QP_j})_{1\leq i,j\leq k+1}.
\end{equation}
By Theorem~\ref{theorem_invariant_r_s_h} (\ref{equation_invariant_r_s_h})
and Definition~\ref{definition_k_stress_alpha}, 
for any choice of $P$ and $Q$, we have
\begin{align*}
c_{k+1}(\alpha^{k+1})
&=\sum_{F\subset \mathbf{A},\dim(F)=k}\alpha^{k+1}(F)\,g_{\sss F}(P,Q)     \\
&=\sum_{F\subset \mathbf{A},\dim(F)=k}
\left(\prod_{A_s\in F}\alpha_s\right) k!V_k(F)\,g_{\sss F}(P,Q)              \\
&=\sum_{F\subset \mathbf{A},\dim(F)=k}
\left(\prod_{A_s\in F}\alpha_s\right) d_{\sss F}(P,Q),
\end{align*}
and thus
\begin{equation}
\label{equaition_invariant_r_s_h_A_det}
c_{k+1}(\alpha^{k+1})=\sum_{F\subset \mathbf{A},\dim(F)=k}
\left(\prod_{A_s\in F}\alpha_s \right)
\det(\overrightarrow{PA_s}\cdot\overrightarrow{QA_t})_{A_s,A_t\in F}.
\end{equation}

Without loss of generality, we use the coordinate of $\mathbb{R}^n$ for $\mathbf{A}$ in the following.
Let $C_1$ be an $n\times n$ matrix whose $i$-th row is vector $\overrightarrow{A_{n+1}A_i}$ for $i\le n$,
$C_2$ be an $n\times n$ matrix whose $i$-th row is vector $\overrightarrow{A_{n+2}A_i}$,
and $D_1=\diag(\alpha_1,\dots,\alpha_n)$ be a diagonal matrix.

\begin{lemma}
\label{lemma_characteristic_polynomial_A} 
The characteristic polynomials of both matrix $C_1C_2^{T}D_1$ and $C_2^{T}D_1C_1$ are $f(x)/x$.
\end{lemma}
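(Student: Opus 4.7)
The plan is to expand the characteristic polynomial of $M := C_1C_2^TD_1$ in terms of its principal minors and match each coefficient with the invariant $c_k(\alpha^k)$ by specializing formula (\ref{equaition_invariant_r_s_h_A_det}) to the choice $P = A_{n+1}$, $Q = A_{n+2}$. Since $A_{n+1}$ and $A_{n+2}$ are playing the special roles that define $C_1$ and $C_2$, this is the natural choice.

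First I would record that by construction the $(i,j)$-entry of $M$ is $\alpha_j\,\overrightarrow{A_{n+1}A_i}\cdot\overrightarrow{A_{n+2}A_j}$. Writing
\[
\det(xI - M) \;=\; \sum_{k=0}^{n}(-1)^k e_k(M)\,x^{n-k},
\]
where $e_k(M)$ is the sum of the $k\times k$ principal minors, column-multilinearity yields
\[
\det(M_{S,S}) \;=\; \Big(\prod_{s\in S}\alpha_s\Big)\det\!\big(\overrightarrow{A_{n+1}A_s}\cdot\overrightarrow{A_{n+2}A_t}\big)_{s,t\in S}
\]
for every subset $S\subseteq\{1,\dots,n\}$ of size $k$.

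Next I would apply (\ref{equaition_invariant_r_s_h_A_det}) with $P=A_{n+1}$, $Q=A_{n+2}$ to evaluate $c_k(\alpha^k)$. Any $(k-1)$-face $F$ containing $A_{n+1}$ or $A_{n+2}$ contributes $0$: indeed, the corresponding row $\overrightarrow{A_{n+1}A_{n+1}}\cdot\overrightarrow{A_{n+2}A_t}$ or column $\overrightarrow{A_{n+1}A_s}\cdot\overrightarrow{A_{n+2}A_{n+2}}$ vanishes identically. The surviving $(k-1)$-faces are exactly those with vertex set $\{A_s:s\in S\}$ for some $S\subseteq\{1,\dots,n\}$ with $|S|=k$, and the resulting summand matches $\det(M_{S,S})$ term by term. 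Summing over $S$ gives $e_k(M)=c_k(\alpha^k)$ for $0\le k\le n$ (with the boundary case $e_0(M)=1=c_0(\alpha^0)$). Since $c_{n+1}(\alpha^{n+1})=0$ in the Euclidean case by Remark~\ref{remark_invariant_n_plus_1}, we conclude $f(x) = x\cdot\det(xI - M)$, i.e.\ $\det(xI - C_1C_2^TD_1) = f(x)/x$.

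For the second matrix I would invoke the standard identity $\det(xI - XY) = \det(xI - YX)$ for square matrices $X,Y$ of the same size: taking $X=C_2^T$ and $Y=D_1C_1$ gives $\det(xI - C_2^TD_1C_1)=\det(xI - D_1C_1C_2^T)$, and a second application with $X=D_1$, $Y=C_1C_2^T$ identifies this with $\det(xI - C_1C_2^TD_1)$, completing the proof. I do not anticipate any real obstacle here; the only bookkeeping of substance is verifying that the map $S\mapsto F=\{A_s:s\in S\}$ sets up the bijection between the principal $k\times k$ minors of $M$ and the non-vanishing $(k-1)$-face contributions to $c_k(\alpha^k)$, with factors of $\alpha_s$ properly accounted for.
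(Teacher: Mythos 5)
Your proof is correct and follows essentially the same route as the paper's: it expands the characteristic polynomial of $C_1C_2^TD_1$ via sums of principal minors, matches them to $c_k(\alpha^k)$ by specializing $P=A_{n+1}$, $Q=A_{n+2}$ in (\ref{equaition_invariant_r_s_h_A_det}), uses $c_{n+1}(\alpha^{n+1})=0$, and then invokes the cyclic invariance of the characteristic polynomial for the second matrix. The only cosmetic difference is that you apply $\det(xI-XY)=\det(xI-YX)$ twice where a single application with $X=C_1$, $Y=C_2^TD_1$ suffices, but the content and verification details (vanishing of faces containing $A_{n+1}$ or $A_{n+2}$, the column-multilinearity bookkeeping for $\alpha_s$) are exactly the ones the paper leaves implicit.
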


\begin{proof}
The coefficient of $x^{n-k}$ in the characteristic
polynomial of $C_1C_2^{T}D_1$ is $(-1)^k$ times the sum of
all principal minors of $C_1C_2^{T}D_1$ of order $k$,
which can be shown to be $(-1)^{k}c_k(\alpha^k)$
by choosing $P=A_{n+1}$ and $Q=A_{n+2}$
in (\ref{equaition_invariant_r_s_h_A_det}).
Also using the fact that $c_{n+1}(\alpha^{n+1})=0$,
thus $f(x)/x$ is the characteristic polynomial of $C_1C_2^{T}D_1$.
As $C_1$ and  $C_2^{T}D_1$ are two square matrices,
then the characteristic polynomials of $C_1C_2^{T}D_1$ and $C_2^{T}D_1C_1$ coincide.
Thus $f(x)/x$ is also the characteristic polynomial of $C_2^{T}D_1C_1$.
\end{proof}

\subsection{Proof of Theorem~\ref{theorem_dual_characteristic_polynomial}}

We continue to use the same notations from Section~\ref{section_characteristic_polynomial}.

Recall that if $\mathbf{B}$ is a dual of $\mathbf{A}$ in $\mathbb{R}^n$, 
then similarly to $\mathbf{A}$,
we obtain a sequence of invariants 
$c_0(\beta^0), \dots, c_{n+1}(\beta^{n+1})$ for $\mathbf{B}$, where $\beta$ is a 1-stress on $\mathbf{B}$,
and $\beta^k$ is induced as a $k$-stress on $\mathbf{B}$. 
The characteristic polynomial of $(\mathbf{B},\beta)$ is similarly defined by
\[g(x)=\sum_{i=0}^{n+1}(-1)^i c_i(\beta^i)x^{n+1-i}.
\]
Notice that $c_{n+1}(\beta^{n+1})=0$ as well.

Let $E_1$ be an $n\times n$ matrix whose $i$-th row is vector $\overrightarrow{B_{n+1}B_i}$ for $i\le n$,
$E_2$ be an $n\times n$ matrix whose $i$-th row is vector $\overrightarrow{B_{n+2}B_i}$,
and $D_2=\diag(\beta_1,\dots,\beta_n)$ be a diagonal matrix.
Then similarly to $f(x)$ (Lemma~\ref{lemma_characteristic_polynomial_A}), for $g(x)$ we have

\begin{lemma}
\label{lemma_characteristic_polynomial_B}
The characteristic polynomials of both matrix $E_1E_2^{T}D_2$ and $E_2^{T}D_2E_1$  are $g(x)/x$.
\end{lemma}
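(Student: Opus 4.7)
The plan is to mimic the proof of Lemma~\ref{lemma_characteristic_polynomial_A} with $(\mathbf{B},\beta)$ in place of $(\mathbf{A},\alpha)$, and with the matrices $E_1,E_2,D_2$ in place of $C_1,C_2,D_1$. The proof of Lemma~\ref{lemma_characteristic_polynomial_A} uses only two general facts: the identity (\ref{equaition_invariant_r_s_h_A_det}) holds for any degenerate $(n+1)$-simplex in $\mathbb{R}^n$ equipped with a 1-stress, and the top invariant vanishes in the Euclidean case. Both apply verbatim to $(\mathbf{B},\beta)$, since $\mathbf{B}$ is itself a degenerate $(n+1)$-simplex in $\mathbb{R}^n$ with non-degenerate $n$-faces and an associated 1-stress $\beta$.

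First I would apply (\ref{equaition_invariant_r_s_h_A_det}) to $(\mathbf{B},\beta)$ with the specific choice $P=B_{n+1}$ and $Q=B_{n+2}$. Any $k$-face $F$ containing $B_{n+1}$ contributes a zero row to the matrix $(\overrightarrow{PB_s}\cdot\overrightarrow{QB_t})$, and similarly a zero column if $F$ contains $B_{n+2}$; so only $k$-faces spanned by vertices among $\{B_1,\dots,B_n\}$ survive. A direct unpacking gives $(E_1E_2^{T}D_2)_{ij}=\beta_j\,\overrightarrow{B_{n+1}B_i}\cdot\overrightarrow{B_{n+2}B_j}$, so the principal minor of $E_1E_2^{T}D_2$ indexed by a subset $S\subseteq\{1,\dots,n\}$ of size $k+1$ equals $\bigl(\prod_{s\in S}\beta_s\bigr)\det\bigl(\overrightarrow{B_{n+1}B_i}\cdot\overrightarrow{B_{n+2}B_j}\bigr)_{i,j\in S}$, which is precisely the summand in (\ref{equaition_invariant_r_s_h_A_det}) for the face indexed by $S$.

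Second, I would read off the characteristic polynomial: the coefficient of $x^{n-k}$ in $\det(xI-E_1E_2^{T}D_2)$ equals $(-1)^k$ times the sum of principal minors of order $k$, which by the previous step is $(-1)^k c_k(\beta^k)$. Combining this with $c_{n+1}(\beta^{n+1})=0$ (Remark~\ref{remark_invariant_n_plus_1}) yields exactly $g(x)/x$. For the coincidence of the two characteristic polynomials, I would then invoke the standard identity $\det(xI-AB)=\det(xI-BA)$ for square matrices of the same size, with $A=E_1$ and $B=E_2^{T}D_2$.

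The proof is essentially a transcription, so I do not expect a real conceptual obstacle. The only point worth checking carefully is the bookkeeping of normalization constants --- verifying that the factor $k!V_k(F)$ implicit in Definition~\ref{definition_k_stress_alpha} and the $\prod_{B_s\in F}\beta_s$ factor line up correctly with the diagonal entries of $D_2$ and the Gram-type determinant, so that no stray constant appears when identifying principal minors with $c_k(\beta^k)$. Since Section~\ref{section_characteristic_polynomial} nowhere exploits features specific to $\mathbf{A}$, this translation should go through without modification.
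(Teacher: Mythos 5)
Your proof is correct and is exactly what the paper intends: the paper does not write out a separate argument for this lemma but simply states it is ``similar to'' Lemma~\ref{lemma_characteristic_polynomial_A}, and your transcription (choosing $P=B_{n+1}$, $Q=B_{n+2}$ in (\ref{equaition_invariant_r_s_h_A_det}), identifying the order-$k$ principal minors of $E_1E_2^{T}D_2$ with $c_k(\beta^k)$, noting $c_{n+1}(\beta^{n+1})=0$, and invoking $\det(xI-AB)=\det(xI-BA)$) is precisely the argument used for Lemma~\ref{lemma_characteristic_polynomial_A}. The bookkeeping you flag does check out: $(E_1E_2^{T}D_2)_{ij}=\beta_j\,\overrightarrow{B_{n+1}B_i}\cdot\overrightarrow{B_{n+2}B_j}$, and factoring $\beta_j$ out of each column of an $S\times S$ submatrix reproduces the $\prod_{s\in S}\beta_s$ weight in (\ref{equaition_invariant_r_s_h_A_det}), with the $k!V_k$ factor already absorbed into the passage from $g_{\sss F}$ to the Gram determinant $d_{\sss F}$.
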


For an $n\times n$ matrix $A$ with non-zero determinant, the eigenvalues of the inverse matrix $A^{-1}$
are the same as the inverse of the eigenvalues of $A$.
So by Lemma~\ref{lemma_characteristic_polynomial_A} and \ref{lemma_characteristic_polynomial_B},
if we can show that the product of $C_2^{T}D_1C_1$ and $E_2^{T}D_2E_1$ is a multiple of the identity matrix $I_n$,
then we prove Theorem~\ref{theorem_dual_characteristic_polynomial}.
This is what we plan to do next.

As $\mathbf{B}$ is a dual of $\mathbf{A}$, then
$\overrightarrow{A_iA_j}\cdot\overrightarrow{B_kB_l}=0$ for all distinct $i,j,k,l$. 
Then
\[\overrightarrow{A_iA_j}\cdot\overrightarrow{B_iB_k}
=\overrightarrow{A_iA_j}\cdot\overrightarrow{B_iB_l}.
\]
So for a fixed $i$, $\overrightarrow{A_iA_j}\cdot\overrightarrow{B_iB_k}$ is independent of $j$ and $k$
as long as $i,j,k$ are distinct. We denote it by $r_i$. 
We will show that $\alpha_i r_i$ is independent of $i$.

Now consider the product of $\sum_{i=1}^{n+1}\frac{1}{r_i}\cdot\overrightarrow{A_{n+2}A_i}$
and $\overrightarrow{B_{n+1}B_j}$ for a fixed $j$ with $j\le n$.
There are only two non-zero terms left, one is 
$\frac{1}{r_j}\overrightarrow{A_{n+2}A_j}\cdot\overrightarrow{B_{n+1}B_j}$
that is equal to $\frac{r_j}{r_j}=1$, and the other is
$\frac{1}{r_{n+1}}\overrightarrow{A_{n+2}A_{n+1}}\cdot\overrightarrow{B_{n+1}B_j}$
that is equal to $\frac{-r_{n+1}}{r_{n+1}}=-1$.
So they cancel each other out. As $\overrightarrow{B_{n+1}B_j}$ with $j\le n$ are
$n$ linearly independent vectors in $\mathbb{R}^n$, 
so $\sum_{i=1}^{n+1}\frac{1}{r_i}\cdot\overrightarrow{A_{n+2}A_i}$ must be 0.
As $\sum_{i=1}^{n+1}\alpha_i\cdot\overrightarrow{A_{n+2}A_i}=0$ and the coefficients 
are unique up to a constant factor, 
so $1/r_i$ is proportional to $\alpha_i$ 
and thus $\alpha_i r_i$ is independent of $i$ for $i\le n+1$.
As $n\ge 1$, by symmetry, $\alpha_i r_i$ is independent of $i$ for $i\le n+2$ as well.

Also by symmetry, $\beta_i r_i$ is independent of $i$ for $i\le n+2$. 
So $\alpha$ and $\beta$ only differ by a constant factor,
and numerically we can set $\beta=\alpha$.

Now consider the matrix 
$C_1E_2^{T}=(\overrightarrow{A_{n+1}A_i}\cdot\overrightarrow{B_{n+2}B_j})_{1\le i,j\le n}$,
which by earlier argument is a diagonal matrix $\diag(r_1,\dots,r_n)$.
Similarly $E_1C_2^{T}$ is also $\diag(r_1,\dots,r_n)$ as well.
Since both $\alpha_i r_i$ and $\beta_i r_i$ are independent of $i$ for $i\le n+2$, 
so by putting what know together, we have
\[D_1(C_1E_2^{T})D_2(E_1C_2^{T})=c\cdot I_n
\]
for some constant $c$, where $I_n$ is the $n\times n$ identity matrix.
As the right side is $c\cdot I_n$, 
so on the left side of the formula
we can move $C_2^{T}$ from the end to the front 
and regroup the matrices 
without changing the value.

\begin{lemma}
\label{lemma_matrix_product_identity}
We have $(C_2^{T}D_1C_1)(E_2^{T}D_2E_1)=c\cdot I_n$ for some constant $c$.
\end{lemma}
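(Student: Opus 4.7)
My plan is to observe that the identity already in hand, $D_1(C_1E_2^{T})D_2(E_1C_2^{T})=c\,I_n$, is exactly a statement that $AB=c\,I_n$ where $A=D_1C_1E_2^{T}D_2E_1$ and $B=C_2^{T}$ are both $n\times n$. The desired conclusion $(C_2^{T}D_1C_1)(E_2^{T}D_2E_1)=c\,I_n$ is then just $BA=c\,I_n$. So the whole lemma reduces to the standard linear-algebra fact that for square matrices of the same size, if $AB=c\,I_n$ with $c\ne 0$, then $A$ and $B$ are mutually inverse up to the scalar $c$, and in particular $BA=c\,I_n$ as well. Once this reformulation is made, no further matrix manipulation is required: the products regroup as $C_2^{T}D_1C_1E_2^{T}D_2E_1=(C_2^{T}D_1C_1)(E_2^{T}D_2E_1)$, which is precisely the claim.

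The only real content beyond this rearrangement is verifying that the scalar $c$ is nonzero, because the $AB=cI\Rightarrow BA=cI$ implication genuinely requires $c\ne 0$. I would do this by chasing invertibility through each factor. Since every $n$-face of $\mathbf{A}$ is assumed non-degenerate, the rows $\overrightarrow{A_{n+1}A_i}$ (resp.\ $\overrightarrow{A_{n+2}A_i}$) of $C_1$ (resp.\ $C_2$) span $\mathbb{R}^n$, so $C_1$ and $C_2$ are invertible; the same argument applied to $\mathbf{B}$ gives that $E_1$ and $E_2$ are invertible. The diagonal matrices $D_1$ and $D_2$ have entries $\alpha_i$ and $\beta_i$, which are all nonzero by the definition of $1$-stress in~(\ref{equation_alpha_r_s_h}). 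Therefore the sixfold product $D_1C_1E_2^{T}D_2E_1C_2^{T}$ is a product of invertible $n\times n$ matrices and hence invertible, forcing $c\ne 0$.

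Alternatively, if one prefers a direct check, $c$ equals the common value of $(\alpha_i r_i)(\beta_i r_i)$ (the $i$-th diagonal entry of the explicit product $D_1\,\diag(r_i)\,D_2\,\diag(r_i)$), which is nonzero because the paper has already identified $\alpha_i r_i$ and $\beta_i r_i$ as nonzero constants (the $r_i$ are nonzero since $C_1E_2^{T}=\diag(r_1,\dots,r_n)$ is invertible). I view this as a sanity check rather than the main line.

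I do not expect a significant obstacle here; the lemma is essentially the $AB=BA$ observation for square matrices with $AB$ a nonzero scalar multiple of the identity, and all required nondegeneracy has already been exploited earlier in the section. The only thing to be slightly careful about is that one must not try to conclude $BA=cI$ from $AB=cI$ for the rectangular blocks $D_1C_1$, $E_2^{T}D_2E_1C_2^{T}$, etc.; the right grouping to apply the square-matrix fact is the one that isolates $C_2^{T}$ as a single block so that both sides of the product are $n\times n$.
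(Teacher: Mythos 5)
Your proposal is correct and follows essentially the same route as the paper: both recognize that the lemma is the cyclic-regrouping step $AB=c\,I_n\Rightarrow BA=c\,I_n$ with $B=C_2^{T}$ isolated as one square block. You go slightly further than the paper by explicitly verifying $c\ne 0$ (which the paper leaves implicit but which is genuinely needed, since $AB=0$ does not in general force $BA=0$), and both of your justifications for this---invertibility of all six factors $D_1,C_1,E_2^{T},D_2,E_1,C_2^{T}$, and nonvanishing of the diagonal entries $\alpha_i r_i\beta_i r_i$---are sound.
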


By Lemma~\ref{lemma_characteristic_polynomial_A} and \ref{lemma_characteristic_polynomial_B},
$f(x)/x$ and $g(x)/x$ are the characteristic polynomials of 
$C_2^{T}D_1C_1$ and $E_2^{T}D_2E_1$ respectively.
Then by Lemma~\ref{lemma_matrix_product_identity},
we prove Theorem~\ref{theorem_dual_characteristic_polynomial}.

\begin{theorem}
\emph{(Theorem~\ref{theorem_dual_characteristic_polynomial})}
If $\mathbf{B}$ is a dual of $\mathbf{A}$ in $\mathbb{R}^n$
and the non-zero roots of $f(x)$ are $\{\lambda_1,\dots,\lambda_n\}$,
then the non-zero roots of $g(x)$ are $\{c/\lambda_1,\dots,c/\lambda_n\}$
for some constant $c$.
\end{theorem}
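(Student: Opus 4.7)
The plan is to realize $f(x)/x$ and $g(x)/x$ as the characteristic polynomials of two explicit matrices $M_A$ and $M_B$ built from the edge vectors of $\mathbf{A}$ and $\mathbf{B}$, and then to show that the duality relation forces $M_A M_B$ to be a scalar multiple of the identity. From $M_B = c \cdot M_A^{-1}$ the eigenvalue reciprocity follows immediately.

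First, I would fix $P = A_{n+1}$, $Q = A_{n+2}$ in formula~(\ref{equaition_invariant_r_s_h_A_det}) and apply Cauchy--Binet to the two rectangular matrices $C_1, C_2$ whose rows are $\overrightarrow{A_{n+1}A_i}$ and $\overrightarrow{A_{n+2}A_i}$ for $i\le n$. This identifies $(-1)^k c_k(\alpha^k)$ with $(-1)^k$ times the sum of the principal $k$-minors of $C_2^T D_1 C_1$ (where $D_1=\diag(\alpha_1,\dots,\alpha_n)$); combined with the fact that $c_{n+1}(\alpha^{n+1})=0$ in the Euclidean case, one obtains $f(x)/x$ as the characteristic polynomial of $M_A := C_2^T D_1 C_1$. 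Exactly the same calculation with the analogous matrices $E_1, E_2, D_2$ for $\mathbf{B}$ yields $g(x)/x$ as the characteristic polynomial of $M_B := E_2^T D_2 E_1$. This is the content one expects from Lemmas~\ref{lemma_characteristic_polynomial_A} and~\ref{lemma_characteristic_polynomial_B}.

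Next I would exploit the duality. The condition $\overrightarrow{A_iA_j}\cdot\overrightarrow{B_kB_l}=0$ for distinct indices implies that $\overrightarrow{A_iA_j}\cdot\overrightarrow{B_iB_k}$ depends only on $i$; call it $r_i$. This shows $C_1 E_2^T$ and $E_1 C_2^T$ are both the diagonal matrix $\diag(r_1,\dots,r_n)$. To link the stresses, I would test the vector $\sum_{i\le n+1} \frac{1}{r_i}\overrightarrow{A_{n+2}A_i}$ against each $\overrightarrow{B_{n+1}B_j}$; only the $j$-th and the $(n+1)$-th terms survive, and they cancel. Since the $\overrightarrow{B_{n+1}B_j}$ span $\mathbb{R}^n$, this vector is zero, so uniqueness of the affine 1-stress forces $1/r_i\propto\alpha_i$. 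By symmetry in $\mathbf{A}$ and $\mathbf{B}$, also $1/r_i\propto\beta_i$, so after a constant rescaling we may take $\beta=\alpha$.

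Putting these ingredients together, the constancy of $\alpha_i r_i$ gives
\[
D_1\,(C_1 E_2^T)\,D_2\,(E_1 C_2^T) = c\cdot I_n
\]
for a non-zero constant $c$. Because the right-hand side is a scalar matrix, I can cyclically shift $C_2^T$ to the front without altering the product, obtaining $(C_2^T D_1 C_1)(E_2^T D_2 E_1) = c\cdot I_n$, i.e.\ $M_A M_B = c\cdot I_n$. The non-zero eigenvalues of $M_B$ are therefore $c/\lambda_i$, which combined with step one proves the theorem. I expect the main obstacle to be the middle step: extracting, from the bare orthogonality relations, both the diagonal structure of $C_1 E_2^T$ and the proportionality between $1/r_i$ and $\alpha_i$ — this is where geometry (duality) and algebra (the 1-stress relation) must be merged. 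The cyclic shift at the end is legitimate precisely because the product is already known to be a scalar matrix, so one should be careful to verify the constancy of $\alpha_i r_i$ before invoking it.
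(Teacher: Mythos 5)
Your proposal reproduces the paper's own proof almost verbatim: you introduce the same matrices $C_1, C_2, D_1$ (and $E_1, E_2, D_2$), realize $f(x)/x$ and $g(x)/x$ as characteristic polynomials via the principal-minor expansion of (\ref{equaition_invariant_r_s_h_A_det}) with $P=A_{n+1}$, $Q=A_{n+2}$, extract $\diag(r_1,\dots,r_n)=C_1E_2^T=E_1C_2^T$ from the duality relation, test $\sum_i r_i^{-1}\overrightarrow{A_{n+2}A_i}$ against the basis $\overrightarrow{B_{n+1}B_j}$ to force $1/r_i\propto\alpha_i$ (and $\propto\beta_i$), assemble $D_1(C_1E_2^T)D_2(E_1C_2^T)=cI_n$, and perform the same cyclic shift of $C_2^T$ to get $(C_2^TD_1C_1)(E_2^TD_2E_1)=cI_n$. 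The argument is correct and essentially identical to the paper's; the only cosmetic difference is the explicit invocation of Cauchy--Binet (the matrices $C_1,C_2$ are in fact square $n\times n$, so the paper simply uses that $C_1C_2^TD_1$ and $C_2^TD_1C_1$ share a characteristic polynomial).
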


\subsection{Geometric interpretation of $c_{n-1}(\alpha^{n-1})=0$}
\label{section_geometric_interpretation_1}

As Theorem~\ref{theorem_dual_characteristic_polynomial} shows, 
if $\mathbf{B}$ is a dual of $\mathbf{A}$ in $\mathbb{R}^n$,
then for any $i$ with $0\le i\le n$,
$c_{n-i}(\alpha^{n-i})=0$ if and only if $c_i(\beta^i)=0$,
where $\beta$ is a 1-stress on $\mathbf{B}$,
and $\beta^k$ is induced as a $k$-stress on $\mathbf{B}$. 
Particularly $c_{n-1}(\alpha^{n-1})=0$ if and only if $c_1(\beta^1)=0$,
so for the Euclidean case interpreting $c_{n-1}(\alpha^{n-1})=0$
is the same as interpreting $c_1(\beta^1)=0$.

With a switch of notation between $c_1(\beta^1)$ and $c_1(\alpha^1)$,
here we provide a more general geometric interpretation of $c_1(\alpha^1)=0$
for not only the Euclidean but also the non-Euclidean cases.

\begin{proposition}
\emph{(\cite[Proposition~2.21]{Zhang:rigidity})}
\label{proposition_common_sphere_hyperplane}
For the spherical (resp. hyperbolic) case,
$c_1(\alpha^1)=0$ if and only if
$A_1$, \dots, $A_{n+2}$ are affinely dependent
in $\mathbb{R}^{n+1}$ (resp. $\mathbb{R}^{n,1}$).
For the Euclidean case,
$c_1(\alpha^1)=0$ if and only if $A_1$, \dots, $A_{n+2}$ 
are lying on an $(n-1)$-dimensional sphere in $\mathbb{R}^{n}$.
\end{proposition}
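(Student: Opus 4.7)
The plan is to evaluate $c_1(\alpha^1)$ in closed form in each geometry and read off the vanishing condition geometrically. For the non-Euclidean cases this will drop out immediately from the invariant formula already established, while for the Euclidean case the main work will be a lifting to the paraboloid together with a dimension count.

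For the non-Euclidean case, I would simply apply (\ref{equation_invariant_s_h_A}) at $k=0$. Since $\norm{\{A_i\}} = |A_i\cdot A_i|^{1/2} = 1$ in both $\mathbb{S}^d$ and $\mathbb{H}^d$, and $V_0(A_i)=1$, the formula collapses to $c_1(\alpha^1) = 2\kappa \sum_i \alpha_i$. Combined with the linear dependence $\sum \alpha_i A_i = 0$ in the ambient $\mathbb{R}^{d+1}$ (resp.\ $\mathbb{R}^{d,1}$), the vanishing $c_1(\alpha^1)=0$ is precisely the extra condition $\sum \alpha_i = 0$, which together with the linear dependence is exactly affine dependence in the ambient space.

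For the Euclidean case, I would compute $c_1(\alpha^1)$ from (\ref{equation_invariant_r_s_h}) by choosing $P=Q$. Using $g_{A_i}(P,P) = |A_i-P|^2$ from Remark~\ref{remark_g} and expanding the square, both parts of the affine dependence will kill the $P$-dependent pieces, leaving $c_1(\alpha^1) = \sum_i \alpha_i |A_i|^2$. To interpret when this vanishes, I would lift each vertex to the paraboloid by setting $\tilde A_i := (A_i, |A_i|^2) \in \mathbb{R}^{n+1}$ and consider the $(n+2)$ vectors $(1,\tilde A_i) \in \mathbb{R}^{n+2}$: a non-trivial linear dependence among them corresponds to an equation $\lambda_0 + \lambda\cdot x + \lambda_{n+1}|x|^2 = 0$ satisfied by every $A_i$, that is, a common $(n-1)$-sphere when $\lambda_{n+1}\ne 0$ and a common $(n-1)$-hyperplane when $\lambda_{n+1}=0$. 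The non-degeneracy of the $n$-faces forces any $n+1$ of the $A_i$ to be affinely independent in $\mathbb{R}^n$, excluding the hyperplane alternative, so any such dependence must encode a genuine $(n-1)$-sphere.

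To close the loop, I would invoke uniqueness: the affine dependence $(\alpha_i)$ on $(1, A_i)\in\mathbb{R}^{n+1}$ is the unique (up to scalar) linear dependence among those vectors, again by non-degeneracy. Hence any lifted dependence on $(1,\tilde A_i)$ must be proportional to $(\alpha_i)$, and the lifted combination $\sum\alpha_i(1, \tilde A_i) = (0, 0, \sum\alpha_i |A_i|^2)$ vanishes exactly when $c_1(\alpha^1) = 0$, yielding both directions of the equivalence simultaneously. The one subtlety I anticipate is confirming that the hyperplane alternative is truly excluded; this is handled cleanly by the non-degeneracy assumption, and beyond that the proof is essentially a dimension count together with the explicit formulas for $g_B$ recorded in Remark~\ref{remark_g}, with no deeper machinery required.
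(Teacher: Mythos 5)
Your proof is correct. The non-Euclidean half is identical to the paper's: apply (\ref{equation_invariant_s_h_A}) at $k=0$, get $c_1(\alpha^1)=2\kappa\sum\alpha_i$, and read off affine dependence. The Euclidean half, however, takes a genuinely different route from the paper. You take $P=Q$ arbitrary in (\ref{equaition_invariant_r_s_h_A_det}), reduce via $\sum\alpha_i=0$ and $\sum\alpha_iA_i=0$ to $c_1(\alpha^1)=\sum\alpha_i|A_i|^2$, then lift to the paraboloid and run a dimension count on the $(n+2)$ vectors $(1,A_i,|A_i|^2)\in\mathbb{R}^{n+2}$, using uniqueness of the affine dependence (which follows from non-degeneracy of the $n$-faces) to rule out the hyperplane alternative. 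The paper instead specializes $P=Q=O_1$, the circumcenter of $A_2,\dots,A_{n+2}$, whereupon the sum collapses to the single term $\alpha_1\bigl(\overrightarrow{O_1A_1}^2-r^2\bigr)$, so vanishing is visibly equivalent to $A_1$ lying on that circumsphere. The paper's choice buys a one-line computation with no dimension count and makes the geometric content immediate; your paraboloid lift is slightly longer but more systematic and, as you note, handles both directions of the equivalence in one stroke. Both proofs hinge on the same two facts (the invariance of $c_1(\alpha^1)$ in $P,Q$ and the non-degeneracy of the $n$-faces).
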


As the proof is rather simple and provides some geometric intuition for the reader,
we repeat it here.

\begin{proof}
For the non-Euclidean case, by (\ref{equation_invariant_s_h_A})
we have $c_1(\alpha^1)=\kappa\cdot2\sum\alpha_{i}$.
Since $\sum\alpha_{i}A_{i}=0$,
so $c_1(\alpha^1)=0$ (now the same as $\sum\alpha_{i}=0$) 
if and only if $A_1$, \dots, $A_{n+2}$ are \emph{affinely} dependent
in $\mathbb{R}^{n+1}$ or $\mathbb{R}^{n,1}$.

For the Euclidean case,
let $O_1$ be the center of the $(n-1)$-dimensional sphere in
$\mathbb{R}^{n}$ that contains points $A_{2}$, \dots, $A_{n+2}$,
and $r$ be the radius.
By (\ref{equation_alpha_r_s_h}) we have $\sum\alpha_i=0$, 
then by choosing $P=Q=O_1$ in (\ref{equaition_invariant_r_s_h_A_det}),
we have
\[c_1(\alpha^1)=\sum\alpha_{i}\overrightarrow{O_1A_i}^2=\alpha_1(\overrightarrow{O_1A_1}^2-r^2).
\]
Therefore $c_1(\alpha^1)=0$ if and only if $A_1$ is on the sphere as well.
\end{proof}

For $n=2$, to illustrate Theorem~\ref{theorem_lifting},
by using Proposition~\ref{proposition_common_sphere_hyperplane}
we give a rather interesting example of ``four points on a circle'' below.

\begin{example}
\label{example_four_points}
In $\mathbb{R}^3$, given four points that are initially in convex position on a plane.
If we allow the four points to vary smoothly in $\mathbb{R}^3$ but constrain them to preserve 
$\sum_{X_1}V_n(F_i)=\sum_{X_2}V_n(F_i)$ (see Question~\ref{question_lifting})
during any motion,
then the four points have to be confined to a plane first
until they move on to a common circle where $c_1(\alpha^1)=0$,
and only from this circle they can be \emph{lifted} 
from $\mathbb{R}^2$ to form a \emph{non}-degenerate 3-simplex in $\mathbb{R}^3$.
For the non-Euclidean case, the critical position when the four points can be lifted 
to form a non-degenerate 3-simplex is when $c_1(\alpha^1)=0$ as well,
namely when the points are affinely dependent in $\mathbb{R}^3$ or $\mathbb{R}^{2,1}$.
Particularly for the spherical case, it is the same as the four points are on 
a small circle on $\mathbb{S}^2$.%
\footnote{For the hyperbolic case, if we use the \emph{upper half-space model} instead, 
then it is also the same as the four points are on a circle (or a straight line)
in the upper half-plane $\mathbb{H}$, but the circle or line can be either fully or partially in $\mathbb{H}$.
}
\end{example}

For $n=2$, we give some simple examples to show that when $c_1(\alpha^1)=0$,
the four points can indeed be lifted from $\mathbb{R}^2$ 
to form a non-degenerate 3-simplex in $\mathbb{R}^3$
while preserving $\sum_{X_1}V_n(F_i)=\sum_{X_2}V_n(F_i)$ during the motion.

\begin{example}
\label{example_rectangle}
In $\mathbb{R}^3$, given four points that are initially on the $xy$-plane
with coordinates $(\pm a,\pm b,0)$. As they form a rectangle, so they are on a circle
and therefore $c_1(\alpha^1)=0$.
Now fix one pair of diagonal points during the motion, and for the other pair of diagonal points, 
let the coordinates $x$ and $y$ be constants and coordinate $z$ be $t$ for $t\ge 0$.
So for any $t>0$, the four points form a non-degenerate 3-simplex with four congruent faces,
and thus $\sum_{X_1}V_n(F_i)=\sum_{X_2}V_n(F_i)$ is preserved during the motion.
\end{example}

\begin{example}
\label{example_trapezoid}
In $\mathbb{R}^3$, given four points that initially form an \emph{isosceles trapezoid} 
on a plane. The lengths of its legs satisfy $l_1=l_2$, and the diagonals satisfy $d_1=d_2$.
As the four points are on a circle and therefore $c_1(\alpha^1)=0$.
Now if we only require $l_1(t)=l_2(t)$ and $d_1(t)=d_2(t)$
during the motion in $\mathbb{R}^3$, with no requirements
for the bases, then for $t>0$ the four points form a non-degenerate 3-simplex
whose two faces in $X_1$ are congruent to the two faces in $X_2$ respectively.
Thus $\sum_{X_1}V_n(F_i)=\sum_{X_2}V_n(F_i)$ is preserved during the motion.
This construction can also be similarly extended to the non-Euclidean case
for a quadrilateral that satisfies $l_1=l_2$ (legs) and $d_1=d_2$ (diagonals);
also notice that the notion of ``parallel'' no longer applies to the pair of bases,
but it is also not needed for the construction.
\end{example}

In fact, Example~\ref{example_rectangle} is a special case of 
Example~\ref{example_trapezoid}.

\subsection{Proof of Theorem~\ref{theorem_dual_sphere}}

By Theorem~\ref{theorem_dual_characteristic_polynomial},
for the Euclidean case
$c_{n-1}(\alpha^{n-1})=0$ if and only if $c_1(\beta^1)=0$.
Then by applying Proposition~\ref{proposition_common_sphere_hyperplane}
to $\mathbf{B}$, we prove Theorem~\ref{theorem_dual_sphere}.

\begin{theorem}
\emph{(Theorem~\ref{theorem_dual_sphere})}
If $\mathbf{B}$ is a dual of $\mathbf{A}$ in $\mathbb{R}^n$,
then $c_{n-1}(\alpha^{n-1})=0$ if and only if  $c_1(\beta^1)=0$,
which is also equivalent to the vertices of $\mathbf{B}$ lying on 
an $(n-1)$-dimensional sphere in $\mathbb{R}^n$.
\end{theorem}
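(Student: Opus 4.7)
The plan is to derive the theorem as a short corollary of the two previously established results: Theorem~\ref{theorem_dual_characteristic_polynomial} (duality of characteristic polynomials) and Proposition~\ref{proposition_common_sphere_hyperplane} (geometric meaning of $c_1(\cdot)=0$). Schematically, I would prove the chain
\[
c_{n-1}(\alpha^{n-1})=0\ \Longleftrightarrow\ c_1(\beta^1)=0\ \Longleftrightarrow\ B_1,\dots,B_{n+2}\text{ lie on an }(n-1)\text{-sphere in }\mathbb{R}^n.
\]

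For the first equivalence, I would make the root duality of Theorem~\ref{theorem_dual_characteristic_polynomial} explicit at the level of coefficients via Vieta's formulas. Since $c_0(\alpha^0)=1$ by convention and $c_{n+1}(\alpha^{n+1})=0$ in the Euclidean case, $f(x)/x$ is monic of degree $n$ with roots exactly the non-zero roots $\{\lambda_1,\dots,\lambda_n\}$ of $f(x)$. Thus $c_k(\alpha^k)=e_k(\lambda_1,\dots,\lambda_n)$, the $k$-th elementary symmetric polynomial; the same identity with $\beta$ holds for $g(x)$. Theorem~\ref{theorem_dual_characteristic_polynomial} identifies the non-zero roots of $g(x)$ with $\{c/\lambda_1,\dots,c/\lambda_n\}$, so
\[
c_1(\beta^1)=\sum_{i=1}^{n}\frac{c}{\lambda_i}=c\cdot\frac{e_{n-1}(\lambda_1,\dots,\lambda_n)}{e_n(\lambda_1,\dots,\lambda_n)}=c\cdot\frac{c_{n-1}(\alpha^{n-1})}{c_n(\alpha^n)}.
\]
Because the $\lambda_i$ are, by construction, non-zero, $c_n(\alpha^n)=\prod_i\lambda_i\neq 0$, and therefore $c_{n-1}(\alpha^{n-1})=0$ if and only if $c_1(\beta^1)=0$.

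The second equivalence is immediate from Proposition~\ref{proposition_common_sphere_hyperplane} applied to $(\mathbf{B},\beta)$ in place of $(\mathbf{A},\alpha)$: in the Euclidean case that proposition says precisely that $c_1(\beta^1)=0$ is equivalent to the vertices $B_1,\dots,B_{n+2}$ of $\mathbf{B}$ lying on a common $(n-1)$-sphere in $\mathbb{R}^n$.

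The only place one might worry is the non-vanishing of $c_n(\alpha^n)$ needed in the Vieta step, but this is automatic from the very statement of Theorem~\ref{theorem_dual_characteristic_polynomial}: its $\lambda_i$ are by definition the non-zero roots of $f(x)$. Hence the substantive work has been absorbed into Theorem~\ref{theorem_dual_characteristic_polynomial} and Proposition~\ref{proposition_common_sphere_hyperplane}, and the present theorem comes out via a one-line symmetric-function identity plus a reference. The hard part lies not in this proof but in having Theorem~\ref{theorem_dual_characteristic_polynomial} available in the first place.
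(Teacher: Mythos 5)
Your proposal is correct and takes essentially the same route as the paper: both deduce the equivalence $c_{n-1}(\alpha^{n-1})=0\Leftrightarrow c_1(\beta^1)=0$ from Theorem~\ref{theorem_dual_characteristic_polynomial} and then invoke Proposition~\ref{proposition_common_sphere_hyperplane} applied to $\mathbf{B}$. The paper treats the first equivalence as immediate (it already recorded the general consequence $c_{n-i}(\alpha^{n-i})=0\Leftrightarrow c_i(\beta^i)=0$ right after stating Theorem~\ref{theorem_dual_characteristic_polynomial}), whereas you helpfully spell out the Vieta-type identity $c_1(\beta^1)=c\,c_{n-1}(\alpha^{n-1})/c_n(\alpha^n)$ with $c_n(\alpha^n)\ne 0$; that is a fleshed-out version of the same step, not a different approach.
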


\section{An alternative proof of Theorem~\ref{theorem_lifting_dual_sphere}}
\label{section_alternative_proof}

By combining Theorem~\ref{theorem_lifting} 
and \ref{theorem_dual_sphere} together, we provide a different formulation 
to answer Question~\ref{question_lifting} for the Euclidean case,
with a slightly stronger statement that includes continuous motion as well.

\begin{theorem}
\emph{(Theorem~\ref{theorem_lifting_dual_sphere})}
Let $\mathbf{B}$ be a dual of $\mathbf{A}$ in $\mathbb{R}^n$
and assume the vertices of $\mathbf{B}$ not lying on an $(n-1)$-dimensional sphere in $\mathbb{R}^n$.
If $\mathbf{A}$ varies continuously in $\mathbb{R}^d$ with a single volume constraint that
$\sum_{X_1}V_n(F_i)-\sum_{X_2}V_n(F_i)$ is preserved as 0,
then the vertices of $\mathbf{A}$
are confined to a lower dimensional $\mathbb{R}^n$ for any sufficiently small motion.
\end{theorem}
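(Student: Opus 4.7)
The plan is to establish, in a neighborhood of the degenerate $\mathbf{A}$, a real-analytic factorization $\Phi = V_{n+1}(\mathbf{A})^2 \cdot g$ with $g$ real-analytic and $g(\mathbf{A}) \ne 0$, where $V_{n+1}(\mathbf{A})$ is the non-negative $(n+1)$-volume. Since $V_{n+1}(\mathbf{A}(t)) = 0$ precisely when $\mathbf{A}(t)$ is degenerate, such a factorization forces $\{\Phi = 0\}$ and $\{V_{n+1} = 0\}$ to coincide locally, so a continuous path from $\mathbf{A}$ along which $\Phi \equiv 0$ must remain in the degenerate subvariety. The argument uses only Cauchy--Binet and the dual orthogonality relation, bypassing both $k$-stress and Theorem~\ref{theorem_dual_sphere}, and the analytic factorization needs no smoothness of the motion.

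First I would parametrize nearby configurations by $A_i = a_i + h_i$ with $a_i \in \mathbb{R}^n$ (the reference $n$-flat) and $h_i \in (\mathbb{R}^n)^\perp$. By Cauchy--Binet,
\[(n!)^2\, V_n(F_i)^2 = (n!)^2\, V_n(\pi F_i)^2 + Q_i(h),\]
where $\pi$ is projection to $\mathbb{R}^n$ and $Q_i(h)$ is the sum of squared $n\times n$ minors of the edge-vector matrix using at least one height coordinate; $Q_i$ is polynomial and of order $\ge 2$ in $h$. Because $\pi\mathbf{A}(t)$ is itself degenerate in $\mathbb{R}^n$ with the same partition $X_1, X_2$, the zeroth-order part of $\Phi$ vanishes and
\[\Phi = H(h) + O(|h|^4), \qquad H(h) = \frac{1}{2(n!)^2}\Bigl(\sum_{X_1} - \sum_{X_2}\Bigr)\frac{Q_i^{(2)}(h)}{V_n(F_i)}.\]
Since $\Phi$ vanishes on the entire degenerate subvariety, and the tangent to that subvariety at $\mathbf{A}$ (in height coordinates, modulo rigid motions of $\mathbb{R}^d$) is the hyperplane $\{h: \sum_i \alpha_i h_i = 0\}$ by the 1-stress characterization of first-order degeneracy, $H$ must vanish on that hyperplane. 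Rotational isotropy in $(\mathbb{R}^n)^\perp$ then forces $H(h) = C \cdot \bigl|\sum_i \alpha_i h_i\bigr|^2$ for a scalar $C(\mathbf{A})$. A parallel Cauchy--Binet expansion gives $V_{n+1}(\mathbf{A}(t))^2 = c_0\,\bigl|\sum_i \alpha_i h_i\bigr|^2 + O(|h|^4)$ with $c_0 > 0$, so $\Phi/V_{n+1}^2 \to C/c_0$ at $\mathbf{A}$.

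To show $C \ne 0$, I would evaluate $H$ on the special one-height perturbation $h_i = \delta_{i,n+2}\, u$ for a unit vector $u \in (\mathbb{R}^n)^\perp$, obtaining
\[C\alpha_{n+2}^2 = \frac{1}{2n^2}\sum_{i \ne n+2}\epsilon_i\frac{V_{n-1}(G_i)^2}{V_n(F_i)},\]
where $G_i = F_i \setminus \{A_{n+2}\}$ and $\epsilon_i = \pm 1$ is the partition sign. Using the dual orthogonality $\overrightarrow{B_{n+2}B_i}\cdot\overrightarrow{A_{n+2}A_j} = c\,\delta_{ij}$ from Section~\ref{section_dual}, the altitude $d_i = \mathrm{dist}(A_{n+2}, \mathrm{aff}(G_i))$ satisfies $d_i = c/|B_i|$, and $V_n(F_i) = \tfrac{1}{n} V_{n-1}(G_i)\,d_i$. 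Substituting, and using the elementary fact $\alpha_i = \epsilon_i V_n(F_i)$ (up to a common positive factor, valid for a degenerate $(n+1)$-simplex in $\mathbb{R}^n$), the signed sum reduces to a positive multiple of $\sum_i \alpha_i |B_i|^2$. Since $\sum_i \alpha_i = 0$ and $\sum_i \alpha_i B_i = 0$ (because $\beta = \alpha$ numerically for the dual), this equals $\sum_i \alpha_i |B_i - P|^2$ for any $P \in \mathbb{R}^n$; choosing $P$ to be the center of the $(n-1)$-sphere through $B_2, \ldots, B_{n+2}$ of radius $r$, a direct calculation gives $\sum_i \alpha_i |B_i|^2 = \alpha_1(|B_1 - P|^2 - r^2)$, which vanishes iff $B_1$ lies on the same sphere. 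Hence $C = 0$ iff the $B_i$ are concyclic, and our hypothesis gives $C \ne 0$.

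Finally, both $\Phi$ and $V_{n+1}^2$ are real-analytic in the vertex coordinates near $\mathbf{A}$, both vanish on the same (irreducible) codimension-one analytic subvariety $\{V_{n+1} = 0\}$, and $\Phi/V_{n+1}^2$ extends to the value $C/c_0 \ne 0$ at $\mathbf{A}$, so there is a real-analytic $g$ with $g(\mathbf{A}) \ne 0$ and $\Phi = V_{n+1}^2 \cdot g$ on a neighborhood. Thus $\{\Phi = 0\} = \{V_{n+1} = 0\}$ locally, and any continuous path from $\mathbf{A}$ along which $\Phi \equiv 0$ must remain on $\{V_{n+1} = 0\}$, i.e., the configuration stays degenerate. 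The main obstacle is the third step: the dual-based rewriting of the signed sum as $\sum_i \alpha_i |B_i|^2$ and the elementary recognition of its vanishing as the sphere condition on the $B_i$. This is the sole place where the hypothesis on $\mathbf{B}$ is used in an essential way, and it is exactly what allows the proof to sidestep both Theorem~\ref{theorem_lifting} and Theorem~\ref{theorem_dual_sphere}.
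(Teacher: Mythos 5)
Your approach is genuinely different from the paper's. The paper proves this synthetically via the Minkowski relation for the outer unit normals of the lifted simplex: negating the normals over $X_2$, the relation together with the single volume constraint forces the sign-adjusted unit normals $B_i(t)$ to be affinely dependent and hence concyclic on the unit sphere; after rescaling they converge (using $\overrightarrow{B_i(t)B_j(t)}\cdot\overrightarrow{A_k(t)A_l(t)}=0$) to a dual of $\mathbf{A}$, contradicting the hypothesis that $\mathbf{B}$ is not concyclic. That argument is short, elementary, and needs no Taylor expansion. Your route via Cauchy--Binet and a real-analytic factorization $\Phi=V_{n+1}^2\cdot g$ is an interesting alternative, but it has a genuine gap at its crux.

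The gap is the claim that $H(h)=C\,|\sum_i\alpha_ih_i|^2$ follows from (a) $H$ vanishing on $\{\sum_i\alpha_ih_i=0\}$ and (b) rotational isotropy in $(\mathbb{R}^n)^\perp$. Those two constraints do not force this form. Invariance under $O(d-n)$ gives $H(h)=\sum_{i,j}c_{ij}\,h_i\cdot h_j$ for a symmetric matrix $(c_{ij})$, and the vanishing condition only says that $(c_{ij})$ carries $\alpha^\perp\subset\mathbb{R}^{n+2}$ into $\mathrm{span}(\alpha)$; equivalently $(c_{ij})=\tfrac{1}{|\alpha|^2}(\alpha\beta^T+\beta\alpha^T)+\mu\,\alpha\alpha^T$ for some $\beta\perp\alpha$ and scalar $\mu$, so that
\begin{equation*}
H(h)=\frac{2}{|\alpha|^2}\Bigl(\sum_i\alpha_ih_i\Bigr)\cdot\Bigl(\sum_i\beta_ih_i\Bigr)+\mu\,\Bigl|\sum_i\alpha_ih_i\Bigr|^2.
\end{equation*}
Translation invariance in the normal direction only yields $\sum_i\beta_i=0$, not $\beta=0$, and when $d=n+1$ there is no rotational isotropy at all. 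Nothing you invoke rules out the cross term. The identity $H(h)=C|\sum\alpha_ih_i|^2$ is in fact true, but it requires a genuine computation: it is essentially the content of the paper's Lemma~\ref{lemma_inequality_r_s_h} (with $C$ a multiple of $c_{n-1}(\alpha^{n-1})$), so it cannot be obtained for free by symmetry, which undercuts the stated aim of bypassing that machinery. A secondary issue: for $d>n+1$ the degenerate locus $\{V_{n+1}=0\}$ has codimension $d-n>1$, so the closing ``same irreducible codimension-one subvariety'' step also needs repair (the paper's proof sidesteps this by working inside the affine span of the moving vertices, effectively reducing to $d=n+1$). Your third step --- the dual-based evaluation of $C$ on the one-height perturbation, reducing $C\ne0$ to non-concyclicity of the $B_i$ --- is correct as far as I can check, but it sits downstream of the unproven identity.
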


Recall that we used $k$-stress to prove Theorem~\ref{theorem_lifting} and matrix theory
(by proving Theorem~\ref{theorem_dual_characteristic_polynomial} first)
to prove Theorem~\ref{theorem_dual_sphere}.
While Theorem~\ref{theorem_lifting_dual_sphere}
is simply a statement that combines Theorem~\ref{theorem_lifting} 
and \ref{theorem_dual_sphere} together, somewhat surprisingly,
an alternative elementary proof of Theorem~\ref{theorem_lifting_dual_sphere} below
bypasses both Theorem~\ref{theorem_lifting} and \ref{theorem_dual_sphere}
and uses neither $k$-stress nor matrix theory,
and is valid for continuous motion as well.
Again, for the non-Euclidean case, we lack a similarly simple statement/proof,
thus making $k$-stress still the main tool to solve Question~\ref{question_lifting}.
Denote by $\mathbf{A}(t)$ the continuous motion (not necessarily smooth) of $\mathbf{A}$
and $\mathbf{A}(0)=\mathbf{A}$.

\begin{proof}
Assume the vertices of $\mathbf{A}(t)$ are lifted from $\mathbb{R}^n$ to form a non-degenerate 
$(n+1)$-simplex for small $t>0$. Also assume $d=n+1$, 
and $\mathbf{A}(0)$ is in a hyperplane  in $\mathbb{R}^{n+1}$ whose $(n+1)$-th coordinate is 0.
For an $n$-face $F_i(t)$ of $\mathbf{A}(t)$, let $u_i(t)$ be the \emph{outward} unit normal 
to $\mathbf{A}(t)$ at $F_i(t)$ if $F_i(t)$ is in $X_1$, and be the \emph{inward} unit normal
at $F_i(t)$ if $F_i(t)$ is in $X_2$. 
If we treat $u_i(t)$ as a unit vector pointing from the origin $O$ to a point $B_i(t)$ in $\mathbb{R}^{n+1}$, 
then by the \emph{Minkowski relation} for areas of facets of a Euclidean polytope, we have
\begin{equation}
\label{equation_minkowski_relation}
\sum_{X_1}V_n(F_i(t))B_i(t)-\sum_{X_2}V_n(F_i(t))B_i(t)=0.
\end{equation}

Let $N$ be the north pole with coordinates $(0,\dots,0,1)$,
then without loss of generality we assume $B_i(t)$ is in the neighborhood of $N$ for every $i$.
For a fixed $t>0$, as $\sum_{X_1}V_n(F_i(t))-\sum_{X_2}V_n(F_i(t))=0$,
so by (\ref{equation_minkowski_relation}) all $B_i(t)$ are \emph{affinely} dependent in $\mathbb{R}^{n+1}$.
Then all $B_i(t)$ are lying on the intersection of the unit $n$-sphere 
and a hyperplane in $\mathbb{R}^{n+1}$,
thus all $B_i(t)$ are lying on an $(n-1)$-sphere. 
Also notice that as $\overrightarrow{B_i(t)B_j(t)}=u_j(t)-u_i(t)$, so for all distinct $i,j,k,l$,
\begin{equation}
\label{equation_A_B_product_t}
\overrightarrow{B_i(t)B_j(t)}\cdot\overrightarrow{A_k(t)A_l(t)}=0.
\end{equation}

Denote by $\mathbf{B}(t)$ the collection of all points $B_i(t)$ for a fixed $t$.
Notationally, $\mathbf{B}(0)$ is not $\mathbf{B}$ but is a set of $n+2$ points that collapses to a single point $N$.
When $t\rightarrow 0$ (but not including $t=0$),
applying the facts that (1) $\mathbf{B}(t)$ is \emph{approximately} on the tangent space at $N$ 
(of the unit sphere) that is parallel to the hyperplane that contains $\mathbf{A}$,
(2) formula (\ref{equation_A_B_product_t}), and
(3) the uniqueness of $\mathbf{B}$ as a dual of $\mathbf{A}$ up to similarity,
we show that up to a proper scaling of $\mathbf{B}(t)$ for all $t>0$,
$\mathbf{B}(t)$ converges to a shape that is similar to $\mathbf{B}$.
Also notice that this property is independent of the path of $\mathbf{A}(t)$.

But as all $B_i(t)$ are lying on an $(n-1)$-sphere,
this contradicts the assumption that the vertices of $\mathbf{B}$ not lying on a 
$(n-1)$-sphere. Thus the vertices of $\mathbf{A}(t)$ 
are confined to a lower dimensional $\mathbb{R}^n$ for small $t\ge 0$.
\end{proof}

\begin{remark}
\label{remark_smooth_continuous}
The reader may notice that the proof above does not require that $\mathbf{A}(t)$ is \emph{smooth},
and is valid for continuous motion as well.
So combined with Theorem~\ref{theorem_dual_sphere}, 
it also makes Theorem~\ref{theorem_lifting} valid for continuous motion for the Euclidean case
without using more advanced tools.
\end{remark}

\begin{remark}
If the vertices of $\mathbf{B}$ are lying on an $(n-1)$-sphere, then in the proof above
by choosing a proper $\mathbf{B}(t)$ 
such that $\mathbf{B}(t)$ is similar to $\mathbf{B}$ for all $t>0$,
we can, inversely, explicitly construct a non-degenerate $\mathbf{A}(t)$ that preserves 
$\sum_{X_1}V_n(F_i)-\sum_{X_2}V_n(F_i)=0$ during the motion.
\end{remark}

\noindent
{\bf Acknowledgements:}
%\begin{acknowledgements}
One of the main results Theorem~\ref{theorem_dual_characteristic_polynomial} 
was obtained during the author's PhD thesis work at M.I.T..
I would like to thank Professor Kleitman and Professor Stanley for their helpful discussions.
I would also like to thank Wei Luo and a referee for carefully reading the manuscript and
making many suggestions.
%\end{acknowledgements}

{\footnotesize
% BibTeX users please use one of
\bibliographystyle{abbrv}  %choose between plain, abbrv, acm
\bibliography{lifting_arxiv}   % name your BibTeX data base

% Non-BibTeX users please use
%\begin{thebibliography}{}
%
% and use \bibitem to create references. Consult the Instructions
% for authors for reference list style.
%
% etc
%\end{thebibliography}
}

\end{document}